\newtheorem{remark}{Remark}[section]
\def\F{{\mathcal F}}
\def\E{{\mathcal E}}
\def\S{{\mathcal Q_{h}}}
\def\O{\Omega}
\def\pa{\partial}
\def\W{{\mathcal W}}
\def\3bar{{|\hspace{-.02in}|\hspace{-.02in}|}}
\def\bn{{\bf n}}
\newtheorem{WG}{Weak Galerkin Algorithm}
\begin{document}

\setlength{\parindent}{0.25in} \setlength{\parskip}{0.08in}

\title{
An Extension of the Morley Element on General Polytopal Partitions Using Weak Galerkin Methods
}

\author{
Dan Li\thanks{Jiangsu Key Laboratory for NSLSCS, School of Mathematical Sciences,  Nanjing Normal University, Nanjing 210023, China (danlimath@163.com). {The research of Dan Li was supported by Jiangsu Funding Program for Excellent
Postdoctoral Talent and} China National Natural Science Foundation Grant (No. 12071227).} \and
Chunmei Wang \thanks{Department of Mathematics, University of Florida, Gainesville, FL 32611 (chunmei.wang@ufl.edu). The research of Chunmei Wang was partially supported by National Science Foundation Grants DMS-2136380 and DMS-2206332.}
\and
Junping Wang\thanks{Division of Mathematical Sciences, National Science Foundation, Alexandria, VA 22314 (jwang@nsf.gov). The research of Junping Wang was supported by the NSF IR/D program, while working at National Science Foundation. However, any opinion, finding, and conclusions or recommendations expressed in this material are those of the author and do not necessarily reflect the views of the National Science Foundation.}}

\maketitle

\begin{abstract}
This paper introduces an extension of the well-known Morley element for the biharmonic equation, extending its application from triangular elements to general polytopal elements using the weak Galerkin finite element methods. By leveraging the Schur complement of the weak Galerkin method, this extension not only preserves the same degrees of freedom as the Morley element on triangular elements but also expands its applicability to general polytopal elements. The numerical scheme is devised by locally constructing weak tangential derivatives and weak second-order partial derivatives. Error estimates for the numerical approximation are established in both the energy norm and the $L^2$ norm. A series of numerical experiments are conducted to validate the theoretical developments.
\end{abstract}

\begin{keywords}
weak Galerkin, finite element methods, Morley element, biharmonic equation, weak tangential derivative, weak Hessian, polytopal partitions, Schur complement.
\end{keywords}

\begin{AMS}
Primary 65N30, 65N12, 65N15; Secondary 35B45, 35J50.
\end{AMS}

\section{Introduction}
This paper focuses on the development of the Morley element for the biharmonic equation using the weak Galerkin (WG) method. For simplicity, we consider the following biharmonic model equation
\begin{equation}\label{model-problem}
\begin{split}
\Delta^2u&=f,\quad\mbox{in}~~\O, \\
        u&=g,\quad\mbox{on}~~\pa\O,\\
\frac{\pa u}{\pa\textbf{n}}&=\nu,\quad\mbox{on}~~\pa\O,
\end{split}
\end{equation}
where $\Omega$ is a bounded polytopal domain in $\mathbb{R}^d (d=2,3)$ and the vector $\textbf{n}$ is an unit outward normal direction to $\pa\O$.

A weak formulation of \eqref{model-problem} reads: Find $u\in H^2(\O)$ such that $u|_{\pa\O}=g$ and $\frac{\pa u}{\pa\textbf{n}}|_{\pa\O}=\nu$ satisfying
\begin{equation}\label{weakform}
\sum_{i,j=1}^d(\pa_{ij}^2u,\pa_{ij}^2v)=(f,v),~~~~\forall v\in H_0^2(\O),
\end{equation}
where $H_0^2(\O)=\{v\in H^2(\O):v|_{\pa\O}=0, \nabla v |_{\pa\O}=\textbf{0}\}$.

The construction of $C^1$ continuous finite elements often necessitates higher order polynomial functions, which can pose challenges in numerical implementation. To address this issue, several nonconforming finite element methods have been proposed. Among them, the Morley element \cite{LM1968} is a well-known nonconforming finite element that minimizes the degrees of freedom but is limited to triangular partitions. In subsequent works \cite{rusa1988,WX2007-3,WX2006}, the Morley element was extended to higher dimensions. Subsequently, \cite{WXH2006,WX2007-2,PS2013,ZZCM2018,ZCM2016} explored the development of the Morley element for general polytopal partitions. In addition to these studies, numerous numerical methods have been developed to solve the biharmonic equation, including discontinuous Galerkin methods \cite{CDG2009,PHML2002,MSB2007,YZ2019}, virtual element methods \cite{AMV2018,CH2020}, and weak Galerkin methods \cite{BGZ2020,SIAM_YZ-2020, MWYZ2014,MWY2014,WW_bihar-2014,WW_HWG-2015}. The weak Galerkin (WG) method, first introduced by Junping Wang and Xiu Ye for second-order elliptic problems \cite{wy}, provides a natural extension of the classical finite element method through a relaxed regularity of the approximating functions.  This novelty provides a high flexibility in numerical approximations with any needed accuracy and  
mesh generation being general polygonal or polyhedral partitions.  To the best of our knowledge, no existing numerical method combines the advantages of minimal degrees of freedom and applicability to general polytopal partitions.

The objective of this paper is to present an extension of the Morley element to general polytopal meshes utilizing the weak Galerkin (WG) method. Drawing inspiration from the de Rham complexes \cite{WWYZ-JCAM2021}, we propose a modification to the original weak finite element space by incorporating additional approximating functions defined on the $(d-2)$-dimensional sub-polytopes of $d$-dimensional polytopal elements. Through the utilization of the Schur complement within the WG method, this innovative approach introduces NE+NF degrees of freedom on general polytopal partitions, where NE and NF represent the numbers of $(d-2)$-dimensional sub-polytopes and $(d-1)$-dimensional sub-polytopes of $d$-dimensional polytopal elements, respectively. The resulting numerical algorithm is designed based on locally constructed weak tangential derivatives and weak second-order partial derivatives. Additionally, we establish error estimates for the numerical approximation in both the energy norm and the $L^2$ norm.

This paper makes several key contributions. Firstly, compared to the well-known Morley element, the proposed WG method allows for the utilization of the local least degrees of freedom on general polytopal elements. This extension enhances the versatility of the Morley element and expands its applicability to a wider range of problems. Secondly, in contrast to existing results on weak Galerkin methods, we introduce a novel technique within the framework of weak Galerkin, which effectively reduces the number of unknowns. This reduction in unknowns improves computational efficiency without sacrificing accuracy. Lastly, our numerical method can be applied to address various partial differential problems, including model problems with weak formulations based on the Hessian operator. This broad applicability demonstrates the effectiveness and potential of our proposed approach in tackling diverse problem domains.

This paper is structured as follows. Section \ref{Section:WeakHessian} provides a review of the definitions of the weak tangential derivative and the weak second-order partial derivatives. In Section \ref{Section:numerical scheme}, we present the weak Galerkin scheme and introduce the concept of its Schur complement. The existence and uniqueness of the solution are investigated in Section \ref{Section:seu}. An error equation for the proposed weak Galerkin scheme is derived in Section \ref{Section:error-equation}. Section \ref{technique-estimate} focuses on deriving technical results to support the analysis. The error estimates for the numerical approximation in the energy norm and the $L^2$ norm are established in Section \ref{Section:EE}. Finally, in Section \ref{Section:NE}, we present a series of numerical results to validate the theoretical developments presented in the preceding sections.

The standard notations are adopted throughout this paper. Let $D$ be any open bounded domain with Lipschitz continuous boundary in $\mathbb{R}^d$. We use $(\cdot,\cdot)_{s,D}$, $|\cdot|_{s,D}$ and $\|\cdot\|_{s,D}$ to denote the inner product, semi-norm and norm in the Sobolev space $H^s(D)$ for any integer $s\geq0$, respectively. For simplicity, the subscript $D$ is  dropped from the notations of the inner product and norm when the domain $D$ is chosen as $D=\O$. For the case of $s=0$, the notations $(\cdot,\cdot)_{0,D}$, $|\cdot|_{0,D}$ and $\|\cdot\|_{0,D}$ are simplified as $(\cdot,\cdot)_D$, $|\cdot|_D$ and $\|\cdot\|_D$, respectively. The notation ``$A\lesssim B$'' refers to the inequality ``$A\leq CB$'' where $C$ presents a generic constant independent of the meshsize or the functions appearing in the inequality.

\section{Discrete weak partial derivatives}\label{Section:WeakHessian}

Let ${\cal T}_h$ be a polygonal or polyhedral partition of the domain $\O$ that is shape regular as specified in \cite{WY-ellip_MC2014}. For each $d$-dimensional polytopal element $T\in{\cal T}_h$, let $\pa T$ be the boundary of $T$ that is the set of $(d-1)$-dimensional polytopal elements denoted by $\F$ (called ``face" for convenience). For each face $\F\subset\pa T$, let $\pa\F$ be the boundary of $\F$ that is the set of $(d-2)$-dimensional polytopal elements denoted by $e$ (called ``edge" for convenience). Let $\F_h$ be the set of all faces in ${\cal T}_h$ and denote by $\F_h^0= \F_h \setminus \partial\Omega$ the set of all interior faces, respectively. Similarly, let $\mathcal{E}_h$ be the set of all edges in ${\cal T}_h$ and denote by $\mathcal{E}_h^0= \mathcal{E}_h \setminus \partial\Omega$ the set of all interior edges, respectively. Moreover, we denote by $h_T$ the diameter of $T\in {\cal T}_h$ and $h=\max_{T\in{\cal T}_h}h_T$ the meshsize of ${\cal T}_h$, respectively. For any given integer $r\geq 0$, let $P_r(T)$ and $P_r(\pa T)$ be the sets of polynomials  on $T$ and $\pa T$ with degrees no greater than $r$, respectively.

For each element $T\in {\cal T}_h$, by a weak function on $T$ we mean a triplet $v=\{v_0,v_b,v_n\bn_{f}\}$, where $v_0$ and $v_b$ are intended for the values of $v$ in the interior of $T$ and on the edge $e$ respectively, and $v_n$ is used to represent the normal component of the gradient of $v$ on the face $\F$ along the direction $\bn_{f}$ being the unit outward normal vector to $\F$. Note that $v_b$ is defined on edge $e$ that is different from the case when $v_b$ is defined on face $\F$ as proposed in \cite{WW_bihar-2014,WW_HWG-2015,ww1}.

We introduce the local discrete space of the weak functions given by
$$
V(T)=\{v=\{v_0,v_b,v_n\bn_{f}\}:v_0\in P_2(T),v_b\in P_0(e),v_n\in P_0(\F),~\F\subset\pa T, e\subset\pa\F\}.
$$

For each face $\F\in \F_h$, denote by ${{\W}}_0(\F)$ the finite element space consisting of constant vector-valued functions tangential to $\F$ given by
$$
{{\W}}_0(\F)=\{\pmb{\psi}:~ \pmb{\psi}\in [P_0(\F)]^d,\ \pmb{\psi}\cdot\bn_{f}=0\}.
$$

\begin{definition}\cite{WWYZ-JCAM2021}(Discrete weak tangential derivative)\label{weak tangential derivativee}
The discrete weak tangential derivative operator, denoted by $\nabla_{w,0,\F}$, is defined as the unique vector-valued polynomial  $\nabla_{w,0,\F}v\in{{\W}}_0(\F)$ for any $v\in V(T)$ satisfying the following equation:
\begin{eqnarray}\label{weak tangential derivative}
 \langle\nabla_{w,0,\F}v,\pmb{\psi}\times\bn_{f}\rangle_{{\F}}
=\langle v_b,\pmb{\psi}\cdot\pmb{\tau}\rangle_{\pa\F},\quad\forall\pmb{\psi}\in{{\W}}_0(\F).
\end{eqnarray}
Here, $\pmb{\tau}$ denotes the unit vector tangential to $\pa\F$ that is chosen such that $\pmb{\tau}$ and $\pmb{n}_{f}$ obey the right-hand rule.
\end{definition}

From the normal derivative $v_n$ and the discrete weak tangential derivative $\nabla_{w,0,\F}v$, the discrete weak gradient of $v$ on face ${\F}$, denoted by $\pmb{v_g}$, can be decomposed into its normal and tangential components; i.e.,
\begin{equation}\label{vgi}
   \pmb{v_g}=v_n\bn_{f}+\nabla_{w,0,\F}v.
\end{equation}

\begin{definition}\cite{WW_bihar-2014}(Discrete weak second order partial derivative)\label{discrete weak partial derivetivee}
The discrete weak second order partial derivative operator, denoted by $\pa_{ij,w,0,T}^2$, is defined as the unique polynomial  $\pa_{ij,w,0,T}^2v\in P_0(T)$ for any $v\in V(T)$ satisfying the following equation:
\begin{eqnarray}\label{discrete weak partial derivetive}
(\pa_{ij,w,0,T}^2v,\varphi)_T=\langle v_{gi}, \varphi n_j\rangle_{\pa T},~~~ \forall\varphi\in P_0(T).
\end{eqnarray}
Here, $v_{gi}$ is $i$-th component of the vector $\pmb{v_g}$ given by \eqref{vgi} and $\bn=(n_1,\cdots, n_d)$ is the unit outward normal direction to $\pa T$, respectively.
\end{definition}

Applying the integration by parts to $(v_0,\pa_{ji}^2\varphi)_T$ and combining \eqref{discrete weak partial derivetive} yield
\begin{equation}\label{error equation-13}
(\pa_{ij,w,0,T}^2v,\varphi)_T
=(\pa_{ij}^2v_0,\varphi)_T-\langle(\pa_iv_0-v_{gi})n_j,\varphi\rangle_{\pa T}
\end{equation}
for any $\varphi\in P_{0}(T)$.

\begin{remark}
 Note that in Definitions \ref{weak tangential derivativee}-\ref{discrete weak partial derivetivee}, the discrete weak tangential derivative and the discrete weak second order partial derivative are discretized  by the lowest order polynomial functions in ${{\W}}_0(\F)$ and $P_0(T)$, respectively. When it comes to the higher order polynomial approximations in  ${{\W}}_r(\F)$ and $P_r(T)$ for an integer $r\geq1$, Definitions \ref{weak tangential derivativee}-\ref{discrete weak partial derivetivee} need to be redesigned accordingly.
 \end{remark}

\section{Weak Galerkin schemes}\label{Section:numerical scheme}
By patching the local finite element $V(T)$ over all the elements $T\in{\cal T}_h$ through the common values on the interior edges $\mathcal{E}_h^0$ for $v_b$ and the interior faces $\F_h^0$ for $v_n\bn_{f}$, we obtain a global weak finite element space $V_h$ as follows
 $$
 V_h=\{v=\{v_0,v_b,v_n\bn_{f}\}:v|_T\in V(T),~T\in{\cal T}_h\}.
 $$
Denote by $V_h^0$ the subspace of $V_h$ with homogeneous boundary conditions for $v_b$ and $v_n$ on $\pa\O$ given by
$$
V_h^0=\{v:v\in V_h, v_b|_e=0,v_n|_{\F}=0, e\subset\pa \O, \F\subset\pa\O\}.
$$

For simplicity of notation, the discrete  weak tangential derivative $\nabla_{w,0,\F}v$ defined by \eqref{weak tangential derivative} and the discrete weak second order partial derivative $\pa_{ij,w,0,T}^2v$ computed by \eqref{discrete weak partial derivetive} are simplified as follows
$$
(\nabla_{w,\F}v)|_T=\nabla_{w,0,\F}(v|_T),\quad(\pa^2_{ij,w}v)|_T=\pa_{ij,w,0,T}^2(v|_T),\quad v\in V_h.
$$

For any $\sigma,v\in V_h$, let us introduce the following bilinear forms:
\begin{equation*}\label{stabilizer}
\begin{split}
(\pa^2_w\sigma,\pa^2_wv)_{{\cal T}_h}=&\sum_{T\in{\cal T}_h}\sum_{i,j=1}^d(\pa_{ij,w}^2\sigma,\pa_{ij,w}^2v)_T,\\
s(\sigma,v)=&\sum_{T\in{\cal T}_h}h_T^{-2}\langle Q_b\sigma_0-\sigma_b,Q_bv_0-v_b\rangle_{\pa\F}\\
       &+\sum_{T\in{\cal T}_h}h_T^{-1}\langle Q_n(\nabla \sigma_0)\cdot\bn_{f}-\sigma_n, Q_n(\nabla v_0)\cdot\bn_{f}-v_n\rangle_{\pa T},\\
a(\sigma,v)=&(\pa^2_w\sigma,\pa^2_wv)_{{\cal T}_h}+s(\sigma,v),
\end{split}
\end{equation*}
where $Q_b$ and $ Q_n$ represent the usual $L^2$ projection operators onto $P_0(e)$ and $P_0(\F)$, respectively.

\begin{WG}
A numerical approximation for \eqref{weakform} is as follows: Find $u_h=\{u_0,u_b,u_n\bn_{f}\}\in V_h$ such that $u_b=Q_bg$  on $e\subset\pa\O$ and $u_n= Q_n\nu$ on $\F\subset\pa\O$ satisfying
\begin{equation}\label{WG-scheme}
a(u_h,v)=(f,v_0),\qquad\forall v\in V_h^0.
\end{equation}
\end{WG}

As an effective approach, the Schur complement technique \cite{Eff-MWY2017,MWYS2015} could be incorporated into the WG scheme \eqref{WG-scheme} to reduce the number of the unknowns. More precisely, a numerical approximation of the Schur complement for \eqref{WG-scheme} is to find $u_h=\{D(u_b,u_n,f),u_b,u_n\bn_{f}\}\in V_h$ satisfying $u_b=Q_bg$ on $e\subset\pa \O$, $u_n= Q_n\nu$ on $\F\subset\pa\O$ and the following equation:
\begin{equation}\label{WG-schemee}
a(\{D(u_b,u_n,f),u_b,u_n\bn_{f}\},v)=0,\,\qquad\forall v=\{0,v_b,v_n\bn_{f}\}\in V_h^0,
\end{equation}
where $u_0=D(u_b,u_n,f)$ can be obtained by solving the equation as follows
\begin{equation}\label{WG-schemeee}
a(\{u_0,u_b,u_n\bn_{f}\},v)=(f,v_0),\quad\quad~\forall v=\{v_0,0,\textbf{0}\}\in V_h^0.
\end{equation}

\begin{remark}
The Schur complement of WG scheme \eqref{WG-schemee}-\eqref{WG-schemeee} and the WG scheme \eqref{WG-scheme} have the same numerical approximation, for which the similar proof can be found in \cite{Eff-MWY2017}. The degrees of freedom of \eqref{WG-schemee}-\eqref{WG-schemeee} are shown in Figure \ref{polygonal-element} for two polygonal  elements: a triangle and a pentagon.
\begin{figure}[htp]
\begin{center}
\begin{tikzpicture}
\coordinate (B1) at (-5.5,1.2); \filldraw[green] (B1) circle(0.08);
\coordinate (B2) at (-6.5,-1.5);  \filldraw[green] (B2)circle(0.08);
\coordinate (B3) at (-3.5,-1); \filldraw[green] (B3) circle(0.08);
\draw node[above] at (B1) {$u_{b1}$}; \draw node[left] at (B2){$u_{b2}$}; \draw node[right] at (B3){$u_{b3}$};
\coordinate (MM1) at (-5,-1.25); \coordinate (MM2) at (-4.5,0.1); \coordinate (MM3) at (-6,-0.2);
\coordinate (MM1end) at (-4.85,-2.2); \coordinate (MM2end) at (-3.8,0.7); \coordinate (MM3end) at (-6.9,0.15);
\coordinate (center) at (-5.16,-0.43);
\coordinate (ne1) at (-4.9,-1.9);\coordinate (ne2) at (-4.3,0.4);\coordinate (ne3) at (-6.4,0);
\draw node[right] at (ne1) {$u_{n1}$};\draw node[above] at (ne2) {$u_{n2}$};\draw node[above] at (ne3) {$u_{n3}$};
\draw node at (center) {T};
 \draw (B1)--(B2)--(B3)--cycle;
\filldraw[red] (MM1) circle(0.08);\filldraw[red] (MM2) circle(0.08);\filldraw[red] (MM3) circle(0.08);
\draw[->] (MM1)--(MM1end);\draw[->] (MM2)--(MM2end);\draw[->] (MM3)--(MM3end);
\coordinate (A1) at (-0.2,1.5); \filldraw[green] (A1) circle(0.08);
\coordinate (A2) at (-1.5, 0);  \filldraw[green] (A2)circle(0.08);
\coordinate (A3) at (0, -1.5); \filldraw[green] (A3) circle(0.08);
\coordinate (A4) at (1.5,-0.3);\filldraw[green] (A4)circle(0.08);
\coordinate (A5) at (1.2,1.0);\filldraw[green] (A5)circle(0.08);
\draw node[above] at (A1) {$u_{b1}$}; \draw node[left] at (A2){$u_{b2}$};
\draw node[below] at (A3){$u_{b3}$}; \draw node[right] at (A4) {$u_{b4}$}; \draw node[right] at (A5) {$u_{b5}$};
\coordinate (M1) at (-0.85,0.75); \coordinate (M2) at (-0.75,-0.75); \coordinate (M3) at (0.75,-0.9);
\coordinate (M4) at (1.35,0.35);  \coordinate (M5) at (0.5,1.25);  \coordinate (MM) at (0.1,0.15);
\coordinate (M1end) at (-1.55,1.35); \coordinate (M2end) at (-1.35,-1.4); \coordinate (M3end) at (1.32,-1.6);
\coordinate (M4end) at (2.25,0.55);   \coordinate (M5end) at (0.82,2.1); \coordinate (center) at (0.1,0.15);
\coordinate (ne1) at (-1.15,1.00);\coordinate (ne2) at (-1,-1);\coordinate (ne3) at (1.05,-1.2);
\coordinate (ne4) at (2.05,0.50);\coordinate (ne5) at (1.0,1.75);
\draw node[left] at (ne1) {$u_{n1}$};\draw node[left] at (ne2) {$u_{n2}$};\draw node[right] at (ne3) {$u_{n3}$};
\draw node[below] at (ne4) {$u_{n4}$};\draw node[below] at (ne5) {$u_{n5}$}; \draw node at (center) {T};
\draw node at (center) {T}; \draw (A1)--(A2)--(A3)--(A4)--(A5)--cycle;
\filldraw[red] (M1) circle(0.08);\filldraw[red] (M2) circle(0.08);\filldraw[red] (M3) circle(0.08);
\filldraw[red] (M4) circle(0.08);\filldraw[red] (M5) circle(0.08);
\draw[->] (M1)--(M1end);\draw[->] (M2)--(M2end);\draw[->] (M3)--(M3end);\draw[->] (M4)--(M4end);\draw[->] (M5)--(M5end);
\end{tikzpicture}
\caption{Local degrees of freedom on a triangular element (left) and a pentagonal element   (right).}\label{polygonal-element}
\end{center}
\end{figure}
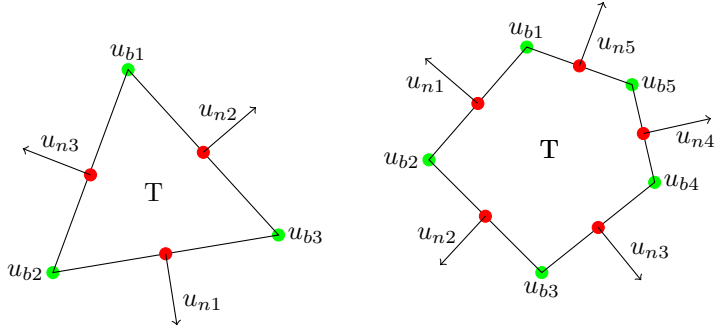
\end{remark}

\section{Solution existence and uniqueness}\label{Section:seu}

On each element $T\in{\cal T}_h$, let $Q_0$ be the usual $L^2$ projection operator onto $P_2(T)$. Then for any $\phi\in H^2(\O)$, we define a projection $Q_h\phi\in V_h$ such that on each element $T,$
$$
Q_h\phi=\{Q_0\phi,Q_b\phi, Q_n( \nabla\phi\cdot\bn_{f})\bn_{f}\}.
$$
Moreover, let ${\S}$ be the locally defined $L^2$ projection operator onto the space $P_0(T)$.

\begin{lemma}  \label{commutative properties}
The aforementioned projection operators $Q_{h}$, $Q_n$ and ${\S}$ satisfy the following commutative properties:
\begin{eqnarray}
\label{comu2}
\nabla_{w,\F}Q_{h}\phi&=&Q_n(\bn_{f}\times(\nabla\phi\times\bn_{f})),\\
\label{comu1}
\pa_{ij,w}^2(Q_h\phi)&=&{\S}\pa_{ij}^2\phi,\quad  i,j=1,\ldots,d
\end{eqnarray}
for any $\phi\in H^2(T).$
\end{lemma}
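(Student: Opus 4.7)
My plan is to verify each identity by unwinding the defining equation of the corresponding weak operator, applying the appropriate integration-by-parts formula, and exploiting the fact that the relevant test functions are already piecewise constant.

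For \eqref{comu2}, I would start from the defining equation \eqref{weak tangential derivative} applied to $Q_h\phi$: for any $\pmb{\psi}\in{\W}_0(\F)$,
$$
\langle \nabla_{w,\F}Q_h\phi,\pmb{\psi}\times\bn_{f}\rangle_{\F}=\langle Q_b\phi,\pmb{\psi}\cdot\pmb{\tau}\rangle_{\pa\F}.
$$
Since $\pmb{\psi}$ is constant on $\F$ and $\pmb{\tau}$ is constant on each $e\subset\pa\F$, the scalar $\pmb{\psi}\cdot\pmb{\tau}$ lives in $P_0(e)$, so the defining property of $Q_b$ lets me replace $Q_b\phi$ by $\phi$. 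Next I would invoke the surface divergence theorem on $\F$ with the constant tangential vector $\pmb{\chi}:=\pmb{\psi}\times\bn_{f}$; because $\nabla_{\F}\cdot\pmb{\chi}=0$, one obtains
$$
\int_{\F}\nabla_{\F}\phi\cdot\pmb{\chi}\, dS=\int_{\pa\F}\phi\,(\pmb{\chi}\cdot\pmb{\mu})\, d\ell,
$$
where $\pmb{\mu}$ is the outer co-normal to $\pa\F$ lying in $\F$. The right-hand-rule convention on $\pmb{\tau}$ and $\bn_{f}$ gives $\bn_{f}\times\pmb{\mu}=\pmb{\tau}$, so the scalar triple product identity yields $\pmb{\chi}\cdot\pmb{\mu}=\pmb{\psi}\cdot(\bn_{f}\times\pmb{\mu})=\pmb{\psi}\cdot\pmb{\tau}$. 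Recognizing $\nabla_{\F}\phi=\bn_{f}\times(\nabla\phi\times\bn_{f})$, this gives $\langle\phi,\pmb{\psi}\cdot\pmb{\tau}\rangle_{\pa\F}=\langle\bn_{f}\times(\nabla\phi\times\bn_{f}),\pmb{\psi}\times\bn_{f}\rangle_{\F}$. Finally, since $\pmb{\psi}\times\bn_{f}\in[P_0(\F)]^d$, inserting $Q_n$ on the left of the inner product leaves it unchanged, and because the map $\pmb{\psi}\mapsto\pmb{\psi}\times\bn_{f}$ is a bijection on ${\W}_0(\F)$, I can read off $\nabla_{w,\F}Q_h\phi=Q_n(\bn_{f}\times(\nabla\phi\times\bn_{f}))$.

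For \eqref{comu1}, I would start from the definition \eqref{discrete weak partial derivetive} applied to $Q_h\phi$: for any $\varphi\in P_0(T)$,
$$
(\pa_{ij,w}^2 Q_h\phi,\varphi)_T=\langle (Q_h\phi)_{gi},\varphi n_j\rangle_{\pa T}.
$$
Using \eqref{vgi}, the gradient vector of $Q_h\phi$ on each face $\F$ is
$$
(Q_h\phi)_g\big|_{\F}=Q_n(\nabla\phi\cdot\bn_{f})\bn_{f}+\nabla_{w,\F}Q_h\phi=Q_n(\nabla\phi\cdot\bn_{f})\bn_{f}+Q_n\bigl(\bn_{f}\times(\nabla\phi\times\bn_{f})\bigr),
$$
where I apply \eqref{comu2}. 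The decomposition $\nabla\phi=(\nabla\phi\cdot\bn_{f})\bn_{f}+\bn_{f}\times(\nabla\phi\times\bn_{f})$ together with the linearity of $Q_n$ and the constancy of $\bn_{f}$ on $\F$ collapses the right-hand side to $Q_n(\nabla\phi)$. Since $\varphi n_j$ is constant on each face, the projection $Q_n$ can be dropped, giving $\langle (Q_h\phi)_{gi},\varphi n_j\rangle_{\pa T}=\langle \pa_i\phi,\varphi n_j\rangle_{\pa T}$. A standard Green's identity on $T$ combined with $\nabla\varphi=0$ then yields $\langle \pa_i\phi,\varphi n_j\rangle_{\pa T}=(\pa_{ij}^2\phi,\varphi)_T=(\S \pa_{ij}^2\phi,\varphi)_T$, and arbitrariness of $\varphi\in P_0(T)$ finishes the identity since $\pa_{ij,w}^2 Q_h\phi\in P_0(T)$ as well.

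The main obstacle I anticipate is the surface-Stokes step in \eqref{comu2}, in particular keeping the orientation convention straight so that the right-hand-rule identity $\bn_{f}\times\pmb{\mu}=\pmb{\tau}$ produces the correct sign; once that orientation bookkeeping is in place, the rest of both proofs are routine applications of integration by parts and the $L^2$-projection property against piecewise-constant test data.
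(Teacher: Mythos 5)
Your proof of \eqref{comu1} is correct and follows essentially the same route as the paper: use \eqref{vgi} together with \eqref{comu2} and the orthogonal decomposition $\nabla\phi=(\nabla\phi\cdot\bn_{f})\bn_{f}+\bn_{f}\times(\nabla\phi\times\bn_{f})$ to collapse $(Q_h\phi)_g$ to $Q_n(\nabla\phi)$, drop the projection against the constant data $\varphi n_j$, and integrate by parts on $T$ using $\nabla\varphi=0$ before reinserting ${\S}$. The one genuine difference is in \eqref{comu2}: the paper does not prove this identity at all but simply cites \cite{WWYZ-JCAM2021}, whereas you supply a direct argument --- replace $Q_b\phi$ by $\phi$ against the edgewise-constant data $\pmb{\psi}\cdot\pmb{\tau}$, apply the surface divergence theorem on the flat face $\F$ with the constant tangential field $\pmb{\psi}\times\bn_{f}$, and convert $(\pmb{\psi}\times\bn_{f})\cdot\pmb{\mu}$ into $\pmb{\psi}\cdot\pmb{\tau}$ via the triple-product identity and the orientation convention $\bn_{f}\times\pmb{\mu}=\pmb{\tau}$, which you get right. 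Your version buys self-containedness at the cost of some orientation bookkeeping and of having to note that $\pmb{\psi}\mapsto\pmb{\psi}\times\bn_{f}$ is a bijection of ${\W}_0(\F)$ (the same nondegeneracy that makes \eqref{weak tangential derivative} well posed); the only caveat is that the cross-product formalism is really a $d=3$ statement, and for $d=2$ (where $e$ is a point and $\pa\F$ consists of two endpoints) the argument must be read with the same notational conventions the paper itself adopts implicitly.
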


  \begin{proof}
The proof of the identity \eqref{comu2} can be found in \cite{WWYZ-JCAM2021}. To derive the other identity \eqref{comu1}, from \eqref{vgi} and \eqref{comu2}, we arrive at
\begin{eqnarray}\label{decomposition-new}
 \begin{split}
(Q_h\phi)_g&=Q_n(\nabla\phi\cdot\bn_{f})\bn_{f}+Q_n(\bn_{f}\times(\nabla\phi\times\bn_{f}))\\
&=Q_n(\nabla\phi).
 \end{split}
\end{eqnarray}
Next, using \eqref{discrete weak partial derivetive}, \eqref{decomposition-new}, the definitions of $Q_n$, ${\S}$ and the usual integration by parts gives
\begin{equation*}
\begin{split}
 &(\pa_{ij,w}^2(Q_h\phi),\varphi)_T\\
=&\langle(Q_n(\nabla\phi))_i,\varphi n_j\rangle_{\pa T}\\
 =&\langle (\nabla\phi)_i,\varphi n_j\rangle_{\pa T}\\
=&\langle\partial_i\phi,\varphi n_j\rangle_{\pa T}+(\phi,\pa^2_{ji}\varphi)_T-\langle\phi n_i,\pa_j\varphi\rangle_{\pa T}\\
=&(\pa^2_{ij}\phi,\varphi)_T\\
=&({\S}\pa_{ij}^2\phi,\varphi)_T
\end{split}
\end{equation*}
for any $\varphi\in P_0(T)$, which implies \eqref{comu1} holds true. This completes the proof.
 \end{proof}

For any $v\in V_h$, we define a semi-norm induced by the WG scheme \eqref{WG-scheme}; i.e.,
\begin{equation}\label{tri-semibar}
\3barv\3bar=\Big(a(v,v)\Big)^{1/2}.
\end{equation}

\begin{lemma}\label{tri-norm}
For any $v\in V_h^0$, the semi-norm $\3barv\3bar$ given by \eqref{tri-semibar} defines a norm.
\end{lemma}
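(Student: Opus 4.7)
The semi-norm $\3bar\cdot\3bar$ is nonnegative, positively homogeneous, and subadditive by construction, so the only nontrivial property is positive definiteness: I need to show that $v \in V_h^0$ with $\3bar v\3bar = 0$ must vanish. Since $a(v,v) = (\pa^2_w v, \pa^2_w v)_{{\cal T}_h} + s(v,v)$ is a sum of $L^2$-squares, the assumption forces both terms to vanish separately. From $s(v,v) = 0$ I would read off $v_b = Q_b v_0$ on every edge $e \subset \pa T$ and $v_n = Q_n(\nabla v_0 \cdot \bn_{f})$ on every face $\F \subset \pa T$. Because $v_0 \in P_2(T)$ automatically satisfies $Q_0 v_0 = v_0$, these identities say that $v|_T$ coincides with $Q_h(v_0|_T)$ on each element. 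The commutative relation \eqref{comu1} then yields $\pa^2_{ij,w} v = \S\,\pa^2_{ij} v_0 = \pa^2_{ij} v_0$, the last equality holding because $v_0 \in P_2(T)$ makes $\pa^2_{ij} v_0$ constant on $T$. Combined with $\pa^2_{ij,w} v = 0$, this gives $\pa^2_{ij} v_0 = 0$ for all $i,j$, so $v_0|_T \in P_1(T)$ on every element.

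Next I would upgrade elementwise affinity to a globally affine function. Using \eqref{vgi} and \eqref{comu2}, on each face $\F$ the discrete weak gradient satisfies $\pmb{v_g} = Q_n(\nabla v_0|_T)$, which equals $\nabla v_0|_T$ outright since $v_0|_T$ is affine. On the other hand $\pmb{v_g} = v_n \bn_{f} + \nabla_{w,\F} v$ is assembled from the single-valued data $v_n \bn_{f}$ (shared across each interior face) and $v_b$ (shared on each edge), so $\pmb{v_g}$ is single-valued across every interior face. This forces $\nabla v_0|_T = \nabla v_0|_{T'}$ on any interior $\F$ shared by $T$ and $T'$. The matching averages $Q_b v_0|_T = v_b = Q_b v_0|_{T'}$ on each edge $e \subset \pa\F$ provide agreement of the two affine functions at the midpoint of $e$, and equal gradient plus one matching point value forces $v_0|_T = v_0|_{T'}$ on $\F$. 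Thus $v_0$ is globally $C^1$ and piecewise affine, hence globally affine on the connected polytopal domain $\O$.

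Finally I would invoke the homogeneous boundary data built into $V_h^0$. The condition $v_n = 0$ on every boundary face $\F$, combined with $v_n = Q_n(\nabla v_0 \cdot \bn_{f}) = \nabla v_0 \cdot \bn_{f}$ (now exact because $\nabla v_0$ is constant), forces $\nabla v_0 \cdot \bn_{f} = 0$ along the outward normal of each boundary face. Since the outward normals of a bounded polytopal domain span $\mathbb{R}^d$, this gives $\nabla v_0 = 0$, so the globally affine $v_0$ is in fact a constant, and the condition $v_b = Q_b v_0 = 0$ at any boundary edge then forces that constant to be zero. The remaining identities $v_b = Q_b v_0 = 0$ and $v_n = Q_n(\nabla v_0 \cdot \bn_{f}) = 0$ on all interior edges and faces follow immediately from $v_0 \equiv 0$, so $v = 0$ as desired. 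The main obstacle I anticipate is the middle paragraph: patching elementwise $P_1$ pieces into a globally affine function from only the edge-averaged and face-projected matching conditions. The key trick is the observation via \eqref{comu2} and \eqref{vgi} that $\pmb{v_g}$ is exactly $\nabla v_0$ viewed from each side, so single-valuedness of $\pmb{v_g}$ transfers to honest gradient continuity of $v_0$; everything else is a direct consequence of Lemma \ref{commutative properties} and the homogeneous constraints defining $V_h^0$.
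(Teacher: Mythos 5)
Your proof is correct and follows essentially the same route as the paper's: $s(v,v)=0$ identifies $v$ with $Q_hv_0$ on each element, the commutativity relation \eqref{comu1} kills the second derivatives of $v_0$, single-valuedness of $\pmb{v_g}$ yields continuity of the piecewise-constant gradient, and the homogeneous boundary data in $V_h^0$ force $v\equiv 0$. The only (harmless) deviation is at the boundary step: the paper obtains the full vector identity $\nabla v_0=\mathbf{0}$ on each boundary face by also invoking $\nabla_{w,\F}v=\mathbf{0}$ there (from $v_b=0$), whereas you use only the normal components $v_n=0$ together with the standard geometric fact that the outward normals of a bounded polytopal domain span $\mathbb{R}^d$.
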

\begin{proof}
It suffices to verify the positive property for $\3barv\3bar$. To this end, we assume that $\3barv\3bar=0$ for some $v\in V_h^0$. It follows from \eqref{tri-semibar} that $\pa^2_wv=0$ and $s(v,v)=0$, which indicates $\pa^2_{ij,w}v=0$ for any $i,j=1,\ldots,d$ on each $T\in{\cal T}_h$, $Q_bv_0=v_b$ on each  $\pa\F$ and $ Q_n(\nabla v_0)\cdot\bn_{f}=v_n$ on each $\pa T$. This further leads to $Q_hv_0=v$ on each $T$. Using the definition of ${\S}$ and \eqref{comu1} gives
$$
\partial_{ij}^2 v_0 = {\S}\partial_{ij}^2 v_0 = \partial_{ij,w}^2 (Q_hv_0) =\partial_{ij,w}^2 v =0,\quad i,j=1,\ldots, d.
$$
Thus, we have $\nabla v_0=const$ on each $T$. It follows from \eqref{decomposition-new} with $\phi=v_0$, \eqref{vgi}, $Q_n(\nabla v_0)\cdot\bn_{f}=v_n$ on each $\pa T$ and $Q_hv_0=v$ on each $T$ that $Q_n(\bn_{f}\times(\nabla v_0\times\bn_{f}))=\nabla_{w,\F}v$ on each $\pa T$, which implies $\nabla v_0=v_n \bn_{f}+\nabla_{w,\F}v$ on each face $\F\in\F_h$ and hence $\nabla v_0\in C^0(\O)$. Using $v_b=0$ on $e\subset\pa\O$ and \eqref{weak tangential derivative}, we have $\nabla_{w,\F}v=\textbf{0}$ on each $\F\subset\pa\O$. This, along with 
$v_n=0$ on each $\F\subset\pa\O$, gives rise to $\nabla v_0=\textbf{0}$ on each $\F\subset\pa\O$. From $\nabla v_0\in C^0(\O)$ and $\nabla v_0=\textbf{0}$ on $\F\subset\pa\O$, we have $\nabla v_0=\textbf{0}$ in $\O$ and further $v_0=const$ on each $T$. This yields $v_n=0$ on each $\pa T$ due to $v_n=Q_n(\nabla v_0)\cdot\bn_{f}$ on each $\pa T$. Furthermore, it follows from $Q_bv_0=v_b$ and $Q_bv_0=v_0$ on each $\pa\F$ that $v_0\in C^1(\Omega)$ and thus $v_0=const$ in $\Omega$. Using $v_b=0$ on $\pa\O$, we have $v_0=0$ in $\O$ and hence $v_b=0$ on each $\pa \F$. This completes the proof of the lemma.
\end{proof}

\begin{lemma}\label{unique}
The WG scheme \eqref{WG-scheme} has a unique numerical solution.
\end{lemma}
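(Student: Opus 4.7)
The scheme \eqref{WG-scheme} is a square linear system in the finite-dimensional space $V_h$, with the boundary values of $u_b$ and $u_n$ fixed by the prescribed data $Q_bg$ and $Q_n\nu$. By elementary linear algebra it is therefore enough to verify uniqueness: if the homogeneous problem admits only the trivial solution, then existence follows automatically.

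The plan is to take $f=0$, $g=0$, $\nu=0$, so that the candidate solution $u_h$ lies in $V_h^0$. Choosing the test function $v=u_h\in V_h^0$ in \eqref{WG-scheme} yields $a(u_h,u_h)=0$, i.e., $\3baru_h\3bar=0$ in the sense of \eqref{tri-semibar}. Lemma \ref{tri-norm} guarantees that $\3bar\cdot\3bar$ is a genuine norm on $V_h^0$, so this forces $u_h=0$.

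Since the homogeneous problem has only the zero solution, the corresponding coefficient matrix is nonsingular, and hence the nonhomogeneous problem \eqref{WG-scheme} admits a unique solution $u_h\in V_h$ for any data $f$, $g$, $\nu$. No obstacle is expected here; all the technical work has already been carried out in Lemma \ref{tri-norm}, where the positivity of $\3bar\cdot\3bar$ on $V_h^0$ was established through the commutative relations of Lemma \ref{commutative properties} and a careful tracking of the boundary information on interior edges and faces.
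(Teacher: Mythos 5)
Your argument is correct and coincides with the paper's own proof: reduce to the homogeneous problem, test with $v=u_h\in V_h^0$ to get $a(u_h,u_h)=0$, and invoke Lemma \ref{tri-norm} to conclude $u_h\equiv 0$, with existence then following from uniqueness for the square finite-dimensional system. No changes needed.
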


\begin{proof}
It suffices to prove that zero is the unique solution of the WG scheme \eqref{WG-scheme} with homogeneous conditions $f=0$, $g=0$ and $\nu=0$. To this end, by setting $v=u_h$ in \eqref{WG-scheme}, we arrive at  
$$a(u_h,u_h)=0,$$
which, together with Lemma \ref{tri-norm}, leads to $u_h\equiv0$. This completes the proof of the lemma.
\end{proof}

\section{Error equations}\label{Section:error-equation}

Denote by $u$ and $u_h\in V_h$ the solutions of the model problem \eqref{model-problem} and the WG scheme \eqref{WG-scheme}, respectively. We define the corresponding error as follows
\begin{equation}\label{error-function}
e_h=Q_hu-u_h.
\end{equation}

\begin{lemma}\label{error equation}
Let $e_h$ be the error function given by \eqref{error-function}. Then, the following equation holds true:
\begin{eqnarray}\label{Error-equation}
a(e_h,v)=\zeta_u(v),\qquad\forall v\in V_h^0,
\end{eqnarray}
where $\zeta_u(v)$ is defined by
 \begin{equation}\label{error equation-remainder}
\begin{split}
\zeta_u(v)
=&s(Q_hu,v)-\sum_{T\in{\cal T}_h}\sum_{i,j=1}^d\langle v_0,\pa_j(\pa^2_{ij}u)n_i\rangle_{\pa T}\\
&+\sum_{T\in{\cal T}_h}\sum_{i,j=1}^d\langle(\pa_iv_0-v_{gi})n_j,(I-{\S})\pa^2_{ij}u\rangle_{\pa T}.
\end{split}
\end{equation}
\end{lemma}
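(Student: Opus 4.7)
The plan is to compute $a(Q_h u, v)$ directly and subtract the WG scheme. Writing $e_h = Q_h u - u_h$, linearity gives $a(e_h, v) = a(Q_h u, v) - (f, v_0)$ for any $v \in V_h^0$, so I only need to massage $a(Q_h u, v)$ into the form $(f,v_0) + \zeta_u(v)$.

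First I would reduce the weak second-derivative pairing using the commutativity identity \eqref{comu1}, which replaces $\partial^2_{ij,w} Q_h u$ by $\mathcal{Q}_h \partial^2_{ij} u$. Then, applying \eqref{error equation-13} with test function $\varphi = \mathcal{Q}_h \partial^2_{ij} u \in P_0(T)$ rewrites $(\partial^2_{ij,w} v, \mathcal{Q}_h \partial^2_{ij} u)_T$ as a bulk term $(\partial^2_{ij} v_0, \mathcal{Q}_h \partial^2_{ij} u)_T$ minus the boundary correction $\langle (\partial_i v_0 - v_{gi}) n_j, \mathcal{Q}_h \partial^2_{ij} u\rangle_{\partial T}$. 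Because $\partial^2_{ij} v_0 \in P_0(T)$, the projection can be dropped in the bulk term, and in the boundary term I would split $\mathcal{Q}_h \partial^2_{ij} u = \partial^2_{ij} u - (I-\mathcal{Q}_h)\partial^2_{ij} u$ to produce both a clean $\partial^2_{ij} u$ piece and the $(I-\mathcal{Q}_h)\partial^2_{ij} u$ piece that already matches the last summand of $\zeta_u(v)$.

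Next I would integrate $(\partial^2_{ij} v_0, \partial^2_{ij} u)_T$ by parts twice to shift all derivatives onto $u$, picking up a volume term plus two boundary terms; after summing over $i,j$ and relabeling using $\partial^2_{ij} = \partial^2_{ji}$, the volume contribution collapses to $(v_0, \Delta^2 u)_T = (v_0, f)_T$ (which will cancel the $(f, v_0)$ subtracted from the WG scheme), one boundary term matches the $-\langle v_0, \partial_j(\partial^2_{ij} u) n_i\rangle_{\partial T}$ term in $\zeta_u$, and the other combines with the $\partial_i v_0$ part of the previous boundary correction to leave a single leftover term $\sum_{T,i,j}\langle v_{gi} n_j, \partial^2_{ij} u\rangle_{\partial T}$.

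The crux of the proof will be showing this leftover term vanishes. I would argue face-by-face: on each interior face $\mathcal{F}$, $v_{gi}$ is single-valued because $v_n \bn_f$ is glued globally in $V_h$ and $\nabla_{w,\mathcal{F}} v$ depends only on the single-valued $v_b$ via \eqref{weak tangential derivative}, while $\partial^2_{ij} u$ is continuous and the two outward normals from neighboring elements cancel; on each boundary face, $v \in V_h^0$ gives $v_n = 0$, and $v_b = 0$ on the boundary edges $e \subset \partial\mathcal{F} \subset \partial\Omega$ forces $\nabla_{w,\mathcal{F}} v = 0$ from \eqref{weak tangential derivative}, so $v_{gi}$ itself is zero there. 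Finally, restoring the stabilizer $s(Q_h u, v)$ in $a(Q_h u, v)$ reproduces exactly $\zeta_u(v)$. The main obstacle is the careful bookkeeping of indices in the twofold integration by parts together with the application of single-valuedness and homogeneous boundary conditions to eliminate the stray $v_{gi}$ term; no new machinery beyond what is already established is needed.
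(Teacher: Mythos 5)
Your proposal is correct and follows essentially the same route as the paper: both rest on the commutativity identity \eqref{comu1}, the identity \eqref{error equation-13} tested with $\varphi=\S\pa^2_{ij}u$, the twofold integration by parts linking $(\pa^2_{ij}u,\pa^2_{ij}v_0)_T$ to $(\Delta^2u,v_0)_T$, and the vanishing of $\sum_{T}\sum_{i,j}\langle v_{gi}n_j,\pa^2_{ij}u\rangle_{\pa T}$ via single-valuedness of $\pmb{v_g}$ on interior faces and $\pmb{v_g}=\textbf{0}$ on $\pa\O$. The only difference is direction of travel (you start from $a(Q_hu,v)$ and recover the PDE, whereas the paper starts by testing $\Delta^2u=f$ with $v_0$ and builds up to $a(Q_hu,v)$), which is an equivalent reorganization of the same algebra.
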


\begin{proof}
For each face $\F\subset\pa\O$, using \eqref{weak tangential derivative} and $v_b=0$ on $e\subset\pa\O$, we have
$$
\langle\nabla_{w,\F}v,\pmb{\psi}\times\bn_{f}\rangle_{{\F}}
=\langle v_b,\pmb{\psi}\cdot\pmb{\tau}\rangle_{\pa\F}=0,\quad\forall\pmb{\psi}\in{{\W}}_0(\F),
 $$
which implies $\nabla_{w,\F}v=\textbf{0}$ on $\pa \Omega$ and hence $\pmb{v_g}=\textbf{0}$ on $\pa\O$ due to $v_n=0$ on each $\F\subset\pa\O$ and \eqref{vgi}.
 
Testing the model equation \eqref{model-problem} by $v_0$ and using the usual integration by parts give
\begin{equation}\label{error equation-1}
\begin{split}
  &(f,v_0)\\
   =&\sum_{T\in{\cal T}_h}(\Delta^2u,v_0)_T\\
   =&\sum_{T\in{\cal T}_h}\sum_{i,j=1}^d (\pa^2_{ij}u,\pa^2_{ij}v_0)_T-\langle\pa^2_{ij}u,\pa_iv_0 n_j\rangle_{\pa T}
  +\langle\pa_j(\pa^2_{ij}u)n_i,v_0\rangle_{\pa T} \\
   =&\sum_{T\in{\cal T}_h}\sum_{i,j=1}^d (\pa^2_{ij}u,\pa^2_{ij}v_0)_T-\langle\pa^2_{ij}u,(\pa_iv_0- v_{gi})n_j\rangle_{\pa T}
 +\langle\pa_j(\pa^2_{ij}u)n_i,v_0\rangle_{\pa T},
\end{split}
\end{equation}
where on the last line we used the fact $\sum_{T\in{\cal T}_h}\sum_{i,j=1}^d\langle\pa^2_{ij}u,v_{gi}n_j\rangle_{\pa T}=0$ due to $\pmb{v_g}=\textbf{0}$ on $\pa\O.$

Next, we deal with the first term on the right hand of \eqref{error equation-1}. By taking $\varphi={\S}\pa^2_{ij}u\in P_0(T)$ in \eqref{error equation-13} and using \eqref{comu1}, we arrive at
\begin{equation}\label{error equation-3}
\begin{split}
(\pa_{ij}^2u,\pa_{ij}^2v_0)_T&=({\S}\pa_{ij}^2u,\pa_{ij}^2v_0)_T\\
&=(\pa_{ij,w}^2v,{\S}\pa^2_{ij}u)_T+\langle(\pa_iv_0-v_{gi})n_j,{\S}\pa^2_{ij}u\rangle_{\pa T}\\
&=(\pa_{ij,w}^2v, \pa_{ij,w}^2(Q_hu))_T+\langle(\pa_iv_0-v_{gi})n_j,{\S}\pa^2_{ij}u\rangle_{\pa T}.
\end{split}
\end{equation}
Combining \eqref{error equation-3} with \eqref{error equation-1} yields
\begin{equation}\label{error equation-remainder-4}
\begin{split}
  \sum_{T\in{\cal T}_h}\sum_{i,j=1}^d(\pa_{ij,w}^2v,\pa_{ij,w}^2Q_hu)_T
=&(f,v_0)-\sum_{T\in{\cal T}_h}\sum_{i,j=1}^d\langle v_0,\pa_j(\pa^2_{ij}u)n_i\rangle_{\pa T}\\
&+\sum_{T\in{\cal T}_h}\sum_{i,j=1}^d\langle(\pa_iv_0-v_{gi})n_j,(I-{\S})\pa^2_{ij}u\rangle_{\pa T}.
\end{split}
\end{equation}
Finally, the difference of the WG scheme \eqref{WG-scheme} and \eqref{error equation-remainder-4} gives rise to \eqref{Error-equation}. This completes the proof.
\end{proof}

\section{Technical inequalities}\label{technique-estimate}
For any $T\in{\cal T}_h$ and $\phi\in H^1(T)$, we have the following trace inequality \cite{WY-ellip_MC2014}:
\begin{equation}\label{trace}
\|\phi\|_{\pa T}^2\lesssim h_T^{-1}\|\phi\|_T^2+h_T\|\nabla\phi\|_T^2.
\end{equation}
Moreover, if $\phi$ is a polynomial on $T\in {\cal T}_h$, using the standard inverse inequality, there holds
\begin{equation}\label{trace1}
\|\phi\|_{\pa T}^2\lesssim h_T^{-1}\|\phi\|_T^2.
\end{equation}

\begin{lemma}\cite{WY-ellip_MC2014}\label{error projection}
 Assume the finite element partition ${\cal T}_h$ is shape regular as specified in \cite{WY-ellip_MC2014}.  Let $0\leq s\leq2$. Then, for any $\phi\in H^3(\O)$, we have
\begin{eqnarray}
&\sum_{T\in{\cal T}_h}h_T^{2s}\|\phi-Q_0\phi\|_{s,T}^2\lesssim h^{6}\|\phi\|_{3}^2,\label{estimate-1}\\
&\sum_{T\in{\cal T}_h}\sum_{i,j=1}^dh_T^{2s}\|\pa^2_{ij}\phi-{\S}\pa^2_{ij}\phi\|_{s,T}^2\lesssim h^{2}\|\phi\|_{3}^2.\label{estimate-2}
\end{eqnarray}
\end{lemma}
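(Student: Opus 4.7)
The plan is to establish both bounds as standard $L^2$-projection approximation estimates on shape-regular polytopal partitions, deriving them from local Bramble-Hilbert-type inequalities applied element by element, then squaring, weighting by $h_T^{2s}$ and summing over $T\in\T_h$.

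For \eqref{estimate-1}, I would start from the fact that $Q_0|_T$ is the $L^2(T)$-projection onto $P_2(T)$ and therefore preserves all polynomials up to degree $2$. Under the shape-regularity assumption of \cite{WY-ellip_MC2014}, a Bramble-Hilbert argument on each element $T$ (or on a shape-regular auxiliary triangulation of $T$) combined with scaling yields the local estimate
$$
\|\phi-Q_0\phi\|_{s,T}\lesssim h_T^{3-s}|\phi|_{3,T},\qquad 0\le s\le 3,
$$
with the implied constant depending only on the shape-regularity parameters. Squaring, multiplying by $h_T^{2s}$ (which cancels the $s$-dependence of the exponent) and summing over $T$ gives
$$
\sum_{T\in\T_h}h_T^{2s}\|\phi-Q_0\phi\|_{s,T}^2\lesssim\sum_{T\in\T_h}h_T^{6}|\phi|_{3,T}^2\le h^{6}\|\phi\|_{3}^2,
$$
which is exactly \eqref{estimate-1}.

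For \eqref{estimate-2}, I would set $\psi=\pa^2_{ij}\phi$ so that $\psi\in H^1(T)$ whenever $\phi\in H^3(\O)$. Because $\S|_T$ is the $L^2(T)$-projection onto $P_0(T)$ and hence preserves constants, Bramble-Hilbert yields $\|\psi-\S\psi\|_{0,T}\lesssim h_T|\psi|_{1,T}$, while $|\psi-\S\psi|_{k,T}=|\psi|_{k,T}$ for $k\ge 1$ since $\S\psi$ is constant. Combining these into $\|\psi-\S\psi\|_{s,T}\lesssim h_T^{1-s}|\psi|_{1,T}$, squaring, multiplying by $h_T^{2s}$, summing over $T$ and $(i,j)$, and using $|\pa_{ij}^2\phi|_{1,T}\le\|\phi\|_{3,T}$ yields $\lesssim h^{2}\|\phi\|_{3}^2$, as claimed.

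The only nontrivial ingredient, and the main obstacle, is transferring the Bramble-Hilbert inequality from a reference simplex to a general polytope $T$ with constants independent of $h$. Under the shape-regularity of \cite{WY-ellip_MC2014}, each polytope admits a covering by a uniformly bounded number of shape-regular simplices (equivalently, is star-shaped with respect to a ball of radius comparable to $h_T$), so the Bramble-Hilbert lemma transfers with a constant depending only on the shape-regularity parameters. Once this is in place the remainder is routine bookkeeping; in fact both estimates are stated in \cite{WY-ellip_MC2014} and I would simply invoke them.
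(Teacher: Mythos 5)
Your proposal is correct and consistent with the paper, which offers no proof of this lemma at all but simply cites \cite{WY-ellip_MC2014}; your Bramble--Hilbert/scaling sketch is the standard argument behind the cited estimates, and your closing remark that you would invoke \cite{WY-ellip_MC2014} directly is exactly what the authors do. The only caveat worth noting is that for \eqref{estimate-2} the case $s=2$ is vacuous for a general $\phi\in H^3(\O)$ (the norm $\|\pa^2_{ij}\phi-{\S}\pa^2_{ij}\phi\|_{2,T}$ is not defined there), but the paper only ever uses $s\le 1$ for that bound, so your restriction to $\|\psi-{\S}\psi\|_{s,T}\lesssim h_T^{1-s}|\psi|_{1,T}$ is the right reading.
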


On each element $T\in{\cal T}_h$, let $\F\subset\pa T$ be a face consisting of edges $e_m$ for $m=1,\ldots, M$. We introduce a linear operator ${\mathcal S}$ mapping $v_b$ from a piecewise constant function  to a piecewise linear function on $\F$ through the least-squares approach to minimize
\begin{equation}\label{linear-operator}
 \sum_{m=1}^M|{\mathcal S}(v_b)(A_m)-v_b(A_m)|^2,
\end{equation}
where $\{A_m\}_{m=1}^{M}$ are the two end points of $\F$ when $d=2$, and $\{A_m\}_{m=1}^{M}$ is the set of midpoints of $e_m$ for $m=1,\ldots,M$ when $d=3$. Denote by $|e_m|$ the length of edge $e_m$.

 \begin{lemma}\label{PROJECTION-ESTIMATE-66}
For any $v\in V_h$, there holds
\begin{equation}\label{21:19}
\sum_{\F\in\F_h}h_T^{-1}\|{\mathcal S}(v_b)\|_{\F}^2\lesssim\sum_{\F\in\F_h}\|v_b\|_{\pa\F}^2.
\end{equation}
\end{lemma}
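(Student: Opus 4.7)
The plan is to prove the inequality face by face, since both sides are sums over $\F \in \F_h$. For a fixed face $\F$ of a shape-regular element $T$, the goal is to show
\[
h_T^{-1}\|{\mathcal S}(v_b)\|_\F^2 \lesssim \|v_b\|_{\pa\F}^2.
\]
The proof decomposes into three ingredients: optimality of the least-squares projection, a discrete-$L^2$ norm equivalence on linear polynomials on $\F$, and a scaling identity converting pointwise data on $\pa\F$ into its $L^2(\pa\F)$ norm.

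First I would exploit optimality. Since ${\mathcal S}(v_b)$ minimizes \eqref{linear-operator} over linear polynomials on $\F$, testing against the zero polynomial yields
\[
\sum_{m=1}^M |{\mathcal S}(v_b)(A_m) - v_b(A_m)|^2 \le \sum_{m=1}^M |v_b(A_m)|^2,
\]
and a triangle inequality then gives $\sum_m |{\mathcal S}(v_b)(A_m)|^2 \lesssim \sum_m |v_b(A_m)|^2$. Next I would apply a standard scaling argument from $\F$ to a reference face $\hat\F$. On $\hat\F$, both $\|\hat p\|_{L^2(\hat\F)}$ and $\bigl(\sum_m |\hat p(\hat A_m)|^2\bigr)^{1/2}$ are norms on the finite-dimensional space of linear polynomials (the $\hat A_m$ are non-degenerate since $M \ge d$ and the reference face is shape regular), and hence equivalent. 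Transporting back to $\F$ using $|\F| \sim h_T^{d-1}$ gives
\[
\|{\mathcal S}(v_b)\|_\F^2 \;\lesssim\; h_T^{d-1} \sum_{m=1}^M |{\mathcal S}(v_b)(A_m)|^2 \;\lesssim\; h_T^{d-1} \sum_{m=1}^M |v_b(A_m)|^2.
\]

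Finally, since $v_b$ is piecewise constant on $\pa\F$ with $v_b(A_m) = v_b|_{e_m}$, and each edge satisfies $|e_m| \sim h_T^{d-2}$ by shape regularity (with the convention that the case $d=2$ reduces to pure point evaluation), we obtain
\[
\sum_{m=1}^M |v_b(A_m)|^2 \;\sim\; h_T^{-(d-2)}\sum_{m=1}^M |e_m|\,|v_b|_{e_m}|^2 \;=\; h_T^{-(d-2)} \|v_b\|_{\pa\F}^2.
\]
Chaining these three estimates yields $\|{\mathcal S}(v_b)\|_\F^2 \lesssim h_T \|v_b\|_{\pa\F}^2$, and summing over $\F \in \F_h$ after dividing by $h_T$ finishes the proof.

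The main technical obstacle is the norm-equivalence step on $\hat\F$: one must ensure the hidden constant depends only on the shape-regularity parameters and not on the number $M$ of edges per face. This is handled by shape regularity, which bounds $M$ uniformly and keeps the $\hat A_m$ well separated so that evaluation at these nodes remains a norm on linear polynomials with a uniform constant.
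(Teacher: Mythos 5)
Your proposal is correct and follows essentially the same route as the paper: the paper's proof is exactly the chain of (i) nodal-value/$L^2$ norm equivalence for the linear polynomial ${\mathcal S}(v_b)$ on $\F$ with the scaling factor $h_T^{d-1}$, (ii) the triangle inequality combined with the least-squares optimality of ${\mathcal S}$ tested against the zero polynomial, and (iii) the identification of $\sum_m h_T^{d-2}|v_b(A_m)|^2$ with $\|v_b\|_{\pa\F}^2$ for the piecewise constant $v_b$. You merely reorder the steps and supply more explicit justification (in particular for the unisolvency of the nodes $\{A_m\}$ under shape regularity), which the paper leaves implicit.
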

\begin{proof}
It follows from  \eqref{linear-operator} that
\begin{equation*}
\begin{split}
\sum_{\F\in\F_h}h_T^{-1}\|{\mathcal S}(v_b)\|_{\F}^2
\lesssim&\sum_{\F\in\F_h}\sum_{m=1}^Mh_T^{-1}|{\mathcal S}(v_b)(A_m)|^2h_T^{d-1}\\
\lesssim&\sum_{\F\in\F_h}\sum_{m=1}^Mh_T^{d-2}(|{\mathcal S}(v_b)(A_m)-v_b(A_m)|^2+|v_b(A_m)|^2)\\
\lesssim&\sum_{\F\in\F_h}\sum_{m=1}^Mh_T^{d-2}(|v_b(A_m)|^2+|v_b(A_m)|^2)\\
\lesssim&\sum_{\F\in\F_h}\|v_b\|_{\pa\F}^2.
\end{split}
\end{equation*}
This completes the proof of the lemma.
\end{proof}

 \begin{lemma}\label{PROJECTION-ESTIMATE-68}
For any $v\in V_h$, we have the following estimate
\begin{equation}\label{21:17}
\Big(\sum_{T\in{\cal T}_h}\sum_{i=1}^dh_T^{-1}\|Q_n(\pa_iv_0)-v_{gi}\|_{\pa T}^2\Big)^{\frac{1}{2}}\lesssim
\3barv\3bar.
\end{equation}
\end{lemma}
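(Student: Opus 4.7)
The plan is to decompose the integrand $Q_n(\pa_i v_0) - v_{gi}$ on each face $\F\subset\pa T$ into a normal and a tangential contribution, and match each to one of the two summands of the stabilizer $s(v,v)$. Using the splitting $\nabla v_0 = (\nabla v_0\cdot\bn_{f})\bn_{f} + \bn_{f}\times(\nabla v_0\times\bn_{f})$ together with \eqref{vgi}, and the fact that $\bn_{f}$ is constant on $\F$ so commutes with $Q_n$, I would write
\begin{equation*}
Q_n(\pa_i v_0) - v_{gi} = \bigl(Q_n(\nabla v_0\cdot\bn_{f}) - v_n\bigr)(\bn_{f})_i + \bigl(Q_n(\bn_{f}\times(\nabla v_0\times\bn_{f})) - \nabla_{w,\F} v\bigr)_i.
\end{equation*}
Squaring, summing over $i=1,\ldots,d$ using $\sum_i (\bn_{f})_i^2 = 1$, and then summing $h_T^{-1}\|\cdot\|_\F^2$ over $\F\subset\pa T$ and $T\in{\cal T}_h$, the normal contribution is precisely the second summand of $s(v,v)$ and is therefore bounded by $\3barv\3bar^2$.

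For the tangential contribution I will invoke the commutative identity \eqref{comu2} of Lemma \ref{commutative properties} applied to $\phi = v_0|_T\in P_2(T)$, which identifies $Q_n(\bn_{f}\times(\nabla v_0\times\bn_{f}))$ with $\nabla_{w,\F}(Q_h v_0|_T)$. Linearity of $\nabla_{w,\F}$ then recasts the tangential error as $G:=\nabla_{w,\F}(v - Q_h v_0|_T)\in \W_0(\F)$, a constant tangential vector on $\F$ that depends only on $v_b - Q_b v_0|_T$ on $\pa\F$, since the interior data of $v - Q_h v_0|_T$ vanishes and its normal data does not enter Definition \ref{weak tangential derivativee}.

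To bound $\|G\|_\F$, I would use Definition \ref{weak tangential derivativee} directly: since $\pmb{\psi}\mapsto \pmb{\psi}\times\bn_{f}$ is an isometric rotation of the tangential space $\W_0(\F)$, there exists $\pmb{\psi}\in\W_0(\F)$ with $|\pmb{\psi}|=|G|$ and $\langle G,\pmb{\psi}\times\bn_{f}\rangle_\F = \|G\|_\F^2$. Substituting in \eqref{weak tangential derivative} and applying Cauchy--Schwarz on $\pa\F$ yield
\begin{equation*}
\|G\|_\F^2 = \langle v_b - Q_b v_0, \pmb{\psi}\cdot\pmb{\tau}\rangle_{\pa\F}\le \|v_b - Q_b v_0\|_{\pa\F}\,\|\pmb{\psi}\cdot\pmb{\tau}\|_{\pa\F}.
\end{equation*}
A direct scaling using $|\F|\sim h_T^{d-1}$, $|\pa\F|\sim h_T^{d-2}$, and the constancy $|\pmb{\psi}|=|G|$ on $\F$ then gives $\|G\|_\F^2 \lesssim h_T^{-1}\|v_b - Q_b v_0\|_{\pa\F}^2$. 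Multiplying by $h_T^{-1}$ and summing over $\F\subset\pa T$ and $T\in{\cal T}_h$ produces a bound by the first summand of $s(v,v)$ and thus by $\3barv\3bar^2$. Adding the normal and tangential contributions finishes the proof.

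The most delicate step is the scaling estimate for $G$: I must verify that the powers of $h_T$ from $|\F|$ and $|\pa\F|$ combine cleanly to the correct factor $h_T^{-1}$ uniformly in $d=2,3$ (noting that in $2$D $\pa\F$ is a pair of vertices and $\|\cdot\|_{\pa\F}$ reduces to a weighted sum of point values, whereas in $3$D it is an honest line integral around a polygonal curve), and I must also justify the bijectivity of $\pmb{\psi}\mapsto\pmb{\psi}\times\bn_{f}$ on $\W_0(\F)$ so that the test function saturating $\|G\|_\F^2$ is legitimate. The remaining decomposition and summation manipulations are otherwise routine.
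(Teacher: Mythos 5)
Your proposal is correct and takes essentially the same route as the paper's proof: the same normal/tangential splitting of $Q_n(\nabla v_0)-\pmb{v_g}$ via \eqref{vgi}, with the normal part absorbed into the second summand of the stabilizer and the tangential part controlled by the defining equation of the weak tangential derivative (the paper phrases your \eqref{comu2}-plus-linearity step as a direct application of \eqref{weak tangential derivative} and the Stokes theorem to the difference), followed by the same duality/saturation argument and the $h_T^{-1/2}$ scaling between $\F$ and $\pa\F$ that matches the first summand of $s(v,v)$. No gaps.
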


\begin{proof}
By using $\nabla v_0=(\nabla v_0\cdot\bn_{f})\bn_{f}+\bn_{f}\times(\nabla v_0\times\bn_{f})$, \eqref{vgi} and \eqref{tri-semibar} gives
\begin{equation}\label{0:33}
\begin{split}
&\Big(\sum_{T\in{\cal T}_h}\sum_{i=1}^dh_T^{-1}\|Q_n(\pa_iv_0)-v_{gi}\|_{\pa T}^2\Big)^{\frac{1}{2}}\\
=&\Big(\sum_{T\in{\cal T}_h}h_T^{-1}\|Q_n(\nabla v_0)-\pmb{v_g}\|_{\pa T}^2\Big)^{\frac{1}{2}}\\
=&\Big(\sum_{T\in{\cal T}_h}h_T^{-1}\|Q_n(\nabla v_0\cdot\bn_{f})\bn_{f}+Q_n(\bn_{f}\times(\nabla v_0\times\bn_{f}))
\\&-(v_n\bn_{f}+\nabla_{w,\F}v)\|_{\pa T}^2\Big)^{\frac{1}{2}}\\
\lesssim&\Big(\sum_{T\in{\cal T}_h}h_T^{-1}\|Q_n(\nabla v_0\cdot\bn_{f})-v_n\|_{\pa T}^2\\&+\sum_{\F\in\F_h}h_T^{-1}\|Q_n(\bn_{f}\times(\nabla v_0\times\bn_{f}))-\nabla_{w,\F}v\|_{\F}^2\Big)^{\frac{1}{2}}\\
\lesssim&\Big(\3barv\3bar^2+\sum_{\F\in\F_h}h_T^{-1}\|Q_n(\bn_{f}\times(\nabla v_0\times\bn_{f}))-\nabla_{w,\F}v\|_{\F}^2\Big)^{\frac{1}{2}}.
\end{split}
\end{equation}
 To estimate the second term on the last line in \eqref{0:33}, it follows from \eqref{weak tangential derivative}, the Stokes theorem and  \eqref{trace1} that
\begin{equation*}
\begin{split}
|\langle Q_n(\bn_{f}\times(\nabla v_0\times\bn_{f}))-\nabla_{w,\F}v,\pmb{\psi}\times\bn_{f}\rangle_{\F}|
=&|\langle Q_bv_0-v_b,\pmb{\psi}\cdot\pmb{\tau}\rangle_{\pa\F}|\\
\lesssim&\|Q_bv_0-v_b\|_{\pa\F}\|\pmb{\psi}\|_{\pa\F}\\
\lesssim&h_T^{-\frac{1}{2}}\|Q_bv_0-v_b\|_{\pa\F}\|\pmb{\psi}\|_{\F}
\end{split}
\end{equation*}
for any $\pmb{\psi}\in{{\W}}_0(\F)$, and thus
\begin{equation}\label{0:36}
\begin{split}
\|Q_n(\bn_{f}\times(\nabla v_0\times\bn_{f}))-\nabla_{w,\F}v\|_{\F}\lesssim h_T^{-\frac{1}{2}}\|Q_bv_0-v_b\|_{\pa\F}.
\end{split}
\end{equation}
Combining \eqref{0:36} with \eqref{0:33} and \eqref{tri-semibar} leads to the desired estimate \eqref{21:17}.
\end{proof}

\begin{lemma}\label{PROJECTION-ESTIMATE-6}
For any $v\in V_h$, the following estimate holds true:
\begin{equation}\label{21:16}
\sum_{T\in{\cal T}_h}|v_0|_{2,T}^2\lesssim\3barv\3bar^2.
\end{equation}
\end{lemma}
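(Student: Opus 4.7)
The plan is to exploit the identity \eqref{error equation-13} with a well-chosen constant test function. Since $v_0\in P_2(T)$, the second partial derivative $\partial_{ij}^2 v_0$ is itself in $P_0(T)$, so it is a legitimate choice for $\varphi$ in \eqref{error equation-13}. Rewriting that identity as
\begin{equation*}
(\partial_{ij}^2 v_0,\varphi)_T = (\partial_{ij,w}^2 v,\varphi)_T + \langle(\partial_i v_0-v_{gi})n_j,\varphi\rangle_{\partial T}
\end{equation*}
and taking $\varphi=\partial_{ij}^2 v_0$ yields an expression for $\|\partial_{ij}^2 v_0\|_T^2$ as a volume term plus a boundary term.

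The key observation for the boundary term is that $v_{gi}\in P_0(\mathcal{F})$ on every face $\mathcal{F}\subset\partial T$. This follows from the decomposition \eqref{vgi}: the normal part $v_n\bn_f$ lies in $P_0(\mathcal{F})$, and the tangential part $\nabla_{w,\mathcal{F}}v$ lies in $\mathcal{W}_0(\mathcal{F})\subset[P_0(\mathcal{F})]^d$. Because $n_j$ is constant on each face and $\partial_{ij}^2 v_0$ is constant on $T$, the inner product is unchanged when $\partial_i v_0$ is replaced by its $L^2$-projection $Q_n(\partial_i v_0)$ onto $P_0(\mathcal{F})$:
\begin{equation*}
\langle(\partial_i v_0-v_{gi})n_j,\partial_{ij}^2 v_0\rangle_{\partial T} = \langle(Q_n(\partial_i v_0)-v_{gi})n_j,\partial_{ij}^2 v_0\rangle_{\partial T}.
\end{equation*}

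Next I would apply the Cauchy--Schwarz inequality and the polynomial trace inequality \eqref{trace1} (giving $\|\partial_{ij}^2 v_0\|_{\partial T}\lesssim h_T^{-1/2}\|\partial_{ij}^2 v_0\|_T$) to deduce
\begin{equation*}
\|\partial_{ij}^2 v_0\|_T \lesssim \|\partial_{ij,w}^2 v\|_T + h_T^{-1/2}\|Q_n(\partial_i v_0)-v_{gi}\|_{\partial T}.
\end{equation*}
Squaring, summing over $T\in\mathcal{T}_h$ and $i,j=1,\ldots,d$, and invoking Lemma \ref{PROJECTION-ESTIMATE-68} to control $\sum_T\sum_i h_T^{-1}\|Q_n(\partial_i v_0)-v_{gi}\|_{\partial T}^2$ by $\3barv\3bar^2$, together with the trivial bound $\sum_T\sum_{i,j}\|\partial_{ij,w}^2 v\|_T^2\le\3barv\3bar^2$, produces \eqref{21:16}.

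The one subtle point—the place where the argument might look magical—is the replacement of $\partial_i v_0$ by $Q_n(\partial_i v_0)$ on $\partial T$; everything else is routine Cauchy--Schwarz with a standard polynomial trace inequality. Once one notices that $v_{gi}$ is a piecewise constant on each face, so that the $L^2$-orthogonality against $Q_n$ can be invoked, the proof reduces directly to Lemma \ref{PROJECTION-ESTIMATE-68} and the definition of $\3bar\cdot\3bar$.
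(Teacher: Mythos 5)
Your proof is correct and follows essentially the same route as the paper: test \eqref{error equation-13} with $\varphi=\partial_{ij}^2v_0$, replace $\partial_i v_0$ by $Q_n(\partial_i v_0)$ on $\partial T$ using the $L^2$-orthogonality of $Q_n$ (a step the paper performs silently and you justify explicitly), then conclude via Cauchy--Schwarz, the trace inequality \eqref{trace1}, and Lemma \ref{PROJECTION-ESTIMATE-68}. The only cosmetic difference is that you derive a per-element bound before summing, while the paper sums first and applies a global Cauchy--Schwarz; the two are equivalent.
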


\begin{proof}
It follows from \eqref{error equation-13} with $\varphi=\pa_{ij}^2v_0$ that
\begin{equation*}
\begin{split}
(\pa_{ij}^2v_0,\pa_{ij}^2v_0)_T=&(\pa_{ij,w}^2v,\pa_{ij}^2v_0)_T+\langle(\pa_iv_0-v_{gi})n_j,\pa_{ij}^2v_0\rangle_{\pa T}\\
=&(\pa_{ij,w}^2v,\pa_{ij}^2v_0)_T+\langle( Q_n(\pa_iv_0)-v_{gi})n_j,\pa_{ij}^2v_0\rangle_{\pa T}.
\end{split}
\end{equation*}
Using the Cauchy-Schwarz inequality, \eqref{21:17} and \eqref{trace1}, we have 
\begin{equation*}
\begin{split}
\sum_{T\in{\cal T}_h}|v_0|_{2,T}^2
\lesssim&\Big(\sum_{T\in{\cal T}_h}\sum_{i,j=1}^d\|\pa_{ij,w}^2v\|_T^2\Big)^{\frac{1}{2}}
   \Big(\sum_{T\in{\cal T}_h}\sum_{i,j=1}^d\|\pa_{ij}^2v_0\|_T^2\Big)^{\frac{1}{2}}\\
&+\Big(\sum_{T\in{\cal T}_h}\sum_{i=1}^dh_T^{-1}\|Q_n(\pa_iv_0)-v_{gi}\|_{\pa T}^2\Big)^{\frac{1}{2}}
   \Big(\sum_{T\in{\cal T}_h}\sum_{i,j=1}^dh_T\|\pa_{ij}^2v_0\|_{\pa T}^2\Big)^{\frac{1}{2}}\\
\lesssim&  \3barv\3bar\Big(\sum_{T\in{\cal T}_h}|v_0|_{2,T}^2\Big)^{\frac{1}{2}},
\end{split}
\end{equation*}
which gives rise to \eqref{21:16}. This completes the proof of the lemma.
\end{proof}

\section{Error estimates}\label{Section:EE}
We start this section by establishing the error estimate for the numerical approximation in the energy norm.

\begin{theorem}\label{THM:energy-estimate}
Let $u$ and $u_h\in V_h$ be the exact solution of the model problem \eqref{model-problem} and the numerical approximation of the WG scheme \eqref{WG-scheme}, respectively. Assume that the exact solution $u$ satisfies $u\in H^4(\O)$. Then, we have the following error estimate:
\begin{equation}\label{energy-estimate-0}
\3bare_h\3bar\lesssim h\|u\|_4.
\end{equation}
\end{theorem}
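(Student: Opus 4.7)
The plan is to test the error equation \eqref{Error-equation} with $v = e_h$, which is legitimate since the boundary conditions $u_b = Q_b g$ and $u_n = Q_n \nu$ on $\pa\Omega$ force $e_h \in V_h^0$; this yields $\3bar e_h \3bar^2 = \zeta_u(e_h)$ by \eqref{tri-semibar}, so everything reduces to establishing the master bound
\[
|\zeta_u(v)| \lesssim h\,\|u\|_4\,\3bar v\3bar, \qquad \forall\, v \in V_h^0,
\]
for each of the three pieces of the functional $\zeta_u$ in \eqref{error equation-remainder}.

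The stabilizer piece $s(Q_hu,v)$ is the easiest: Cauchy--Schwarz extracts a factor $s(v,v)^{1/2} \le \3bar v\3bar$, leaving $s(Q_hu,Q_hu)^{1/2}$ whose two summands involve $Q_b(Q_0u - u)$ on $\pa\F$ and $Q_n((\nabla Q_0 u - \nabla u)\cdot\bn_f)$ on $\pa T$. Iterating the trace inequality \eqref{trace} from $T$ down to $\F$ and, in the three-dimensional case, from $\F$ down to $\pa\F$, and then invoking \eqref{estimate-1} of Lemma \ref{error projection}, I would bound $s(Q_hu,Q_hu)^{1/2} \lesssim h\,\|u\|_3$, as required.

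The consistency term $\sum_{T,i,j}\langle v_0,\,\pa_j(\pa_{ij}^2 u)\,n_i\rangle_{\pa T}$ is the main difficulty, because $v_0$ is only single-valued inside elements. My strategy is to subtract $\mathcal{S}(v_b)$ freely: since $v_b$ is single-valued on interior edges, $\mathcal{S}(v_b)$ is single-valued on every interior face, and since $v_b \equiv 0$ on boundary edges, $\mathcal{S}(v_b) = 0$ on every boundary face; consequently $\sum_{T,i,j}\langle \mathcal{S}(v_b),\,\pa_j(\pa_{ij}^2 u)\,n_i\rangle_{\pa T} = 0$ by cancellation of opposite outward normals on interior faces. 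Cauchy--Schwarz and the trace inequality \eqref{trace} applied to $\pa^3 u$ then give the factor $\bigl(\sum_T h_T\|\pa_j(\pa_{ij}^2 u)\|_{\pa T}^2\bigr)^{1/2}\lesssim \|u\|_4$, which is the sole place where the $H^4$ regularity is used. The remaining factor $\bigl(\sum_T h_T^{-1}\|v_0 - \mathcal{S}(v_b)\|_{\pa T}^2\bigr)^{1/2}$ I would split via $v_0 - \mathcal{S}(v_b) = (v_0 - \mathcal{S}(Q_b v_0)) + \mathcal{S}(Q_b v_0 - v_b)$: Lemma \ref{PROJECTION-ESTIMATE-66} converts the second piece into $\sum_\F\|Q_b v_0 - v_b\|_{\pa\F}^2 \lesssim h^2 s(v,v) \le h^2 \3bar v\3bar^2$, while the first piece is bounded by a standard approximation argument (viewing $\mathcal{S}(Q_b v_0)$ as a linear surrogate for the quadratic $v_0$, producing an $h_T^{3/2}|v_0|_{2,T}$ bound per face) followed by Lemma \ref{PROJECTION-ESTIMATE-6}. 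Both pieces deliver an $h\,\3bar v\3bar$ factor.

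The final piece $\sum_{T,i,j}\langle(\pa_i v_0 - v_{gi})n_j,\,(I-\S)\pa_{ij}^2 u\rangle_{\pa T}$ is the most routine: I would split $\pa_i v_0 - v_{gi} = (\pa_i v_0 - Q_n\pa_i v_0) + (Q_n\pa_i v_0 - v_{gi})$, handling the second summand with Lemma \ref{PROJECTION-ESTIMATE-68} and the first with a Poincar\'e bound on $\F$ together with the inverse trace \eqref{trace1} and Lemma \ref{PROJECTION-ESTIMATE-6}. On the complementary factor, \eqref{trace} together with \eqref{estimate-2} in Lemma \ref{error projection} yields $\sum_T h_T\|(I-\S)\pa_{ij}^2 u\|_{\pa T}^2 \lesssim h^2\|u\|_3^2$. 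Combining the three estimates gives $|\zeta_u(e_h)|\lesssim h\,\|u\|_4\,\3bar e_h\3bar$, and a single cancellation completes the proof. The crux of the argument, as emphasized, is the use of $\mathcal{S}(v_b)$ to restore single-valuedness on interior faces for the consistency term -- a device unique to this polytopal extension that compensates for the fact that the boundary degree of freedom lives on edges rather than faces.
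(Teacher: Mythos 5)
Your proposal is correct and follows essentially the same route as the paper's proof: testing with $v=e_h$, bounding the stabilizer term via the trace inequality and \eqref{estimate-1}, handling the consistency term by subtracting the single-valued reconstruction ${\mathcal S}(e_b)$ and splitting through $Q_be_0$ with Lemmas \ref{PROJECTION-ESTIMATE-66} and \ref{PROJECTION-ESTIMATE-6}, and treating the last term by inserting $Q_n(\pa_ie_0)$ and invoking Lemma \ref{PROJECTION-ESTIMATE-68}. The only deviations (a two-piece rather than three-piece split of $e_0-{\mathcal S}(e_b)$, and bounding $\sum_Th_T\|(I-{\S})\pa^2_{ij}u\|_{\pa T}^2$ by \eqref{trace} and \eqref{estimate-2} instead of a tangential-derivative bound on ${\F}$) are cosmetic and do not change the argument.
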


\begin{proof}
It follows from \eqref{Error-equation} with $v=e_h\in V_h^0$ that
\begin{equation}\label{energy-estimate-2}
\begin{split}
\3bare_h\3bar^2
=&s(Q_hu,e_h)-\sum_{T\in{\cal T}_h}\sum_{i,j=1}^d\langle e_0,\pa_j(\pa^2_{ij}u)n_i\rangle_{\pa T}\\
&+\sum_{T\in{\cal T}_h}\sum_{i,j=1}^d\langle(\pa_ie_0-e_{gi})n_j,(I-{\S})\pa^2_{ij}u\rangle_{\pa T}\\
=&J_1+J_2+J_3.
\end{split}
\end{equation}

To analyze $J_1$, by using the Cauchy-Schwarz inequality, \eqref{trace} and \eqref{estimate-1} yields
\begin{equation}\label{energy-estimate-3}
\begin{split}
|J_1|
\leq&|\sum_{T\in{\cal T}_h}h_T^{-2}\langle Q_b(Q_0u)-Q_bu,Q_be_0-e_b\rangle_{\pa\F}|\\
&+|\sum_{T\in{\cal T}_h}h_T^{-1}\langle Q_n(\nabla Q_0u)\cdot\bn_{f}- Q_n(\nabla u\cdot\bn_{f}),
     Q_n(\nabla e_0)\cdot\bn_{f}-e_n\rangle_{\pa T}|\\
\lesssim&\Big(\sum_{T\in{\cal T}_h}h_T^{-2}\|Q_0u-u\|_{\pa\F}^2\Big)^{\frac{1}{2}}
   \Big(\sum_{T\in{\cal T}_h}h_T^{-2}\|Q_be_0-e_b\|_{\pa\F}^2\Big)^{\frac{1}{2}}\\
&+\Big(\sum_{T\in{\cal T}_h}h_T^{-1}\|\nabla Q_0u-\nabla u\|_{\pa T}^2\Big)^{\frac{1}{2}}
    \Big(\sum_{T\in {\cal T}_h}h_T^{-1}\| Q_n(\nabla e_0)\cdot\bn_{f}-e_n\|_{\pa T}^2\Big)^{\frac{1}{2}}\\
\lesssim&\Big(\sum_{T\in{\cal T}_h}h_T^{-4}\|Q_0u-u\|_{T}^2+h_T^{-2}\|\nabla(Q_0u-u)\|_{T}^2+|Q_0u-u|_{2,T}^2\Big)^{\frac{1}{2}}\3bare_h\3bar\\
&+\Big(\sum_{T\in{\cal T}_h}h_T^{-2}\|\nabla Q_0u-\nabla u\|_{T}^2+|Q_0u-u|_{2,T}^2\Big)^{\frac{1}{2}}\3bare_h\3bar \\
\lesssim& h\|u\|_3\3bare_h\3bar.
\end{split}
\end{equation}

To deal with $J_2$, using \eqref{linear-operator}, $e_b=0$ on $\pa\O$, the Cauchy-Schwarz inequality, \eqref{21:19} and \eqref{trace}, we have
\begin{equation*}
\begin{split}
|J_2|=&|\sum_{T\in{\cal T}_h}\sum_{i,j=1}^d\langle e_0,\pa_j(\pa^2_{ij}u)n_i\rangle_{\pa T}|\\
=&|\sum_{T\in{\cal T}_h}\sum_{i,j=1}^d\langle e_0-{\mathcal S}(e_b),\pa_j(\pa^2_{ij}u)n_i\rangle_{\pa T}|\\
=&|\sum_{T\in{\cal T}_h}\sum_{i,j=1}^d\langle(e_0-{\mathcal S}(e_0))+({\mathcal S}(e_0)-{\mathcal S}(Q_be_0))+{\mathcal S}(Q_be_0-e_b),\pa_j(\pa^2_{ij}u)n_i\rangle_{\pa T}|\\
\lesssim&\Big(\sum_{\F\in{\F}_h}h_T^{-3}\|e_0-{\mathcal S}(e_0)\|_{\F}^2+h_T^{-3}\|{\mathcal S}(e_0)-{\mathcal S}(Q_be_0)\|_{\F}^2
+h_T^{-3}\|{\mathcal S}(Q_be_0-e_b)\|_{\F}^2\Big)^{\frac{1}{2}}\\
&\cdot\Big(\sum_{T\in{\cal T}_h}\sum_{i,j=1}^dh_T^3\|\pa_j(\pa^2_{ij}u)\|_{\pa T}^2\Big)^{\frac{1}{2}}\\
\lesssim&\Big(\sum_{\F\in{\F}_h}h_T\|D_{\pmb{\tau\tau}}e_0\|_{\F}^2+\sum_{m=1}^Mh_T^{-2}\|e_0(A_m)-Q_be_0\|_{e_m}^2
+h_T^{-2}\|Q_be_0-e_b\|_{\pa\F}^2\Big)^{\frac{1}{2}}\\
&\cdot\Big(\sum_{T\in{\cal T}_h}h_T^4|u|_{3,T}^2+h_T^{2}|u|_{4,T}^2\Big)^{\frac{1}{2}},
\end{split}
\end{equation*}
where $D_{\pmb{\tau\tau}}e_0$ represents the second tangential derivative on $\F.$\\
For the case of two dimensions, from the definition of $Q_b$, we arrive at
$$\sum_{\F\in{\F}_h}\sum_{m=1}^Mh_T^{-2}\|e_0(A_m)-Q_be_0\|_{e_m}^2=0.$$
For the case of three dimensions, it is clear that
\begin{equation*}
\begin{split}
\sum_{\F\in{\F}_h}\sum_{m=1}^Mh_T^{-2}\|e_0(A_m)-Q_be_0\|_{e_m}^2
\lesssim&\sum_{\F\in{\F}_h}\sum_{m=1}^Mh_T^{-2}\|e_0(A_m)-\frac{1}{2}(e_0(A_{sm})+e_0(A_{dm})) \|_{e_m}^2\\
&+h_T^{-2}|e_j|^4\|\widehat{D}_{\pmb{\tau\tau}}e_0\|_{e_m}^2.
\end{split}
\end{equation*}
We apply \eqref{trace1} and \eqref{21:16} to obtain
\begin{equation}\label{energy-estimate-5}
\begin{split}
|J_2|
\lesssim&\Big(\sum_{T\in{\cal T}_h}|e_0|_{2,T}^2+\sum_{\F\in{\F}_h}\sum_{m=1}^Mh_T^{-2}|e_j|^4\|\widehat{D}_{\pmb{\tau\tau}}e_0\|_{e_m}^2+\3bare_h\3bar^2\Big)^{\frac{1}{2}} h\|u\|_4\\
\lesssim&\Big(\3bare_h\3bar^2+\sum_{T\in{\cal T}_h}|e_0|_{2,T}^2\Big)^{\frac{1}{2}}h\|u\|_4\\
\lesssim&h\|u\|_4\3bare_h\3bar,
\end{split}
\end{equation}
where $\widehat{D}_{\pmb{\tau\tau}}e_0$ denotes the second tangential derivative on $e_m$, $A_{sm}$ and $A_{dm}$ stand for the starting point and ending point of edge $e_m$, respectively.

To estimate the term $J_3$, by using the Cauchy-Schwarz inequality, \eqref{21:17}, \eqref{trace}, \eqref{trace1} and  \eqref{21:16} gives
  \begin{equation}\label{energy-estimate-4}
\begin{split}
|J_3|=&|\sum_{T\in{\cal T}_h}\sum_{i,j=1}^d\langle(\pa_ie_0-e_{gi})n_j,(I-{\S})\pa^2_{ij}u\rangle_{\pa T}|\\
\lesssim&\Big(\sum_{T\in{\cal T}_h}\sum_{i=1}^dh_T^{-1}\|\pa_ie_0-Q_n(\pa_ie_0)\|_{\pa T}^2+h_T^{-1}\|Q_n(\pa_ie_0)-e_{gi}\|_{\pa T}^2\Big)^{\frac{1}{2}}\\
&\cdot\Big(\sum_{T\in{\cal T}_h}\sum_{i,j=1}^dh_T\|(I-{\S})\pa^2_{ij}u\|_{\pa T}^2\Big)^{\frac{1}{2}}\\
\lesssim&\Big(\sum_{\F\in{\F}_h}h_T^{-1}\|\nabla e_0-Q_n(\nabla e_0)\|_{\F}^2+\3bare_h\3bar^2\Big)^{\frac{1}{2}}\\
&\cdot\Big(\sum_{\F\in{\F}_h}\sum_{i,j=1}^dh_T^3\|D_{\pmb{\tau}}(\pa^2_{ij}u)\|_{\F}^2\Big)^{\frac{1}{2}}\\
\lesssim&\Big(\sum_{\F\in{\F}_h}h_T\|D_{\pmb{\tau}}(\nabla e_0)\|_{\F}^2+\3bare_h\3bar^2\Big)^{\frac{1}{2}}h\|u\|_4\\
\lesssim&\Big(\sum_{T\in{\cal T}_h}|e_0|_{2,T}^2+\3bare_h\3bar^2\Big)^{\frac{1}{2}}h\|u\|_4\\
\lesssim&h\|u\|_4\3bare_h\3bar,
\end{split}
\end{equation}
where $D_{\pmb{\tau}}(\pa^2_{ij}u)$ and $D_{\pmb{\tau}}(\nabla e_0)$ represent the tangential components of $\pa^2_{ij}u$ and $\nabla e_0$ on $\F$, respectively.\\
Finally, combining \eqref{energy-estimate-3}-\eqref{energy-estimate-4} with \eqref{energy-estimate-2} leads to \eqref{energy-estimate-0}. This completes the proof of the theorem.
\end{proof}

We shall establish the error estimate for the numerical approximation in the usual $L^2$ norm. To this end, let us consider the following dual problem
\begin{equation}\label{dual-equation}
\begin{split}
\Delta^2\Phi&=e_0,\quad\mbox{in}~~\O,\\
        \Phi&=0,~\quad\mbox{on}~~\pa\O,\\
\frac{\pa\Phi}{\pa\textbf{n}}&=0,~\quad\mbox{on}~~\pa\O.
\end{split}
\end{equation}
Assume that the dual problem \eqref{dual-equation} has the $H^4$-regularity property in the sense that the solution $\Phi$ satisfies $\Phi\in H^4(\O)$ and the following priori estimate:
\begin{equation}\label{dual-regular}
\|\Phi\|_4\leq C\|e_0\|.
\end{equation}

\begin{theorem}\label{THM:L2-estimate-e0}
Let $u$ and $u_h\in V_h$ be the exact solution of the model problem \eqref{model-problem} and the numerical approximation of the WG scheme \eqref{WG-scheme}, respectively. Assume that the exact solution $u\in H^4(\O)$ and the $H^4$-regularity property \eqref{dual-regular} hold true. Then, the following error estimate holds true:
\begin{equation}\label{THM:L2-estimate-0}
\|e_0\|\lesssim h^2\|u\|_4.
\end{equation}
\end{theorem}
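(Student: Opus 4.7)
The proof is a duality (Aubin--Nitsche) argument built on the dual problem \eqref{dual-equation}, for which the $H^4$-regularity bound \eqref{dual-regular} is assumed. The starting identity is $\|e_0\|^2=(\Delta^2\Phi,e_0)=\sum_{T\in\T_h}(\Delta^2\Phi,e_0)_T$, and the strategy is to rewrite this sum in two different ways and match them. First, integrate by parts twice on each element to produce $\sum_{T,i,j}(\pa^2_{ij}\Phi,\pa^2_{ij}e_0)_T$ together with boundary residuals of $\Phi$ paired with $e_0$ and $e_{h,g}$ (exactly as in \eqref{error equation-1}, using that $\pmb{e_{h,g}}=\mathbf{0}$ on $\pa\O$ because $e_h\in V_h^0$). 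Next, apply \eqref{error equation-13} with $\varphi=\mathcal{Q}_h\pa_{ij}^2\Phi$ together with the commutativity \eqref{comu1} in the form $\pa^2_{ij,w}(Q_h\Phi)=\mathcal{Q}_h\pa^2_{ij}\Phi$, mirroring the derivation of \eqref{error equation-remainder-4} but with $\Phi$ in the role of $u$ and $e_h$ in the role of $v$. This produces the key representation
\begin{equation*}
\|e_0\|^2=\sum_{T,i,j}(\pa^2_{ij,w}e_h,\pa^2_{ij,w}Q_h\Phi)_T+R_1(\Phi,e_h),
\end{equation*}
where $R_1$ is the analogue of the $J_2,J_3$-type residuals but with $\Phi$ replacing $u$.

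The second step is to use the error equation \eqref{Error-equation} with the admissible test function $v=Q_h\Phi\in V_h^0$ (admissible because $\Phi=0$ and $\pa_n\Phi=0$ on $\pa\O$, so $Q_b\Phi=0$ and $Q_n(\nabla\Phi\cdot\bn_f)=0$). This gives
\begin{equation*}
\sum_{T,i,j}(\pa^2_{ij,w}e_h,\pa^2_{ij,w}Q_h\Phi)_T=\zeta_u(Q_h\Phi)-s(e_h,Q_h\Phi).
\end{equation*}
Substituting into the previous display, one obtains
\begin{equation*}
\|e_0\|^2=\zeta_u(Q_h\Phi)-s(e_h,Q_h\Phi)+R_1(\Phi,e_h),
\end{equation*}
so it suffices to show each term on the right is $\lesssim h^2\|u\|_4\|\Phi\|_4$. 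Applying \eqref{dual-regular} then gives $\|e_0\|\lesssim h^2\|u\|_4$.

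The remaining work is six Cauchy--Schwarz bounds. The terms grouped in $\zeta_u(Q_h\Phi)$ contain only projection errors of the smooth data $u$ and $\Phi$: for $s(Q_hu,Q_h\Phi)$ one applies Cauchy--Schwarz on the stabilizer and uses the estimates from the proof of Theorem \ref{THM:energy-estimate} symmetrically in both arguments to get $O(h^2\|u\|_3\|\Phi\|_3)$; for $\sum\langle Q_0\Phi,\pa_j\pa^2_{ij}u\,n_i\rangle_{\pa T}$ one uses continuity of $\Phi$ and $\pa_j\pa^2_{ij}u$ across faces together with $\Phi|_{\pa\O}=0$ to subtract $\sum\langle\Phi,\pa_j\pa^2_{ij}u\,n_i\rangle_{\pa T}=0$, so that only the projection error $Q_0\Phi-\Phi$ appears, gaining an extra power of $h$ from \eqref{estimate-1}. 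The stabilizer term $s(e_h,Q_h\Phi)$ and the two $\Phi$-residuals inside $R_1$ all combine the energy estimate $\3bar e_h\3bar\lesssim h\|u\|_4$ (Theorem \ref{THM:energy-estimate}) with an $O(h)$ projection-error factor in $\Phi$, handled by repeating the argument used for $J_2$ and $J_3$ in Theorem \ref{THM:energy-estimate} (with the linear operator $\mathcal{S}$, Lemmas \ref{PROJECTION-ESTIMATE-66}--\ref{PROJECTION-ESTIMATE-6}, and trace/projection estimates \eqref{trace}--\eqref{estimate-2}) but now assigning the $O(h)$ factor to $\Phi$ rather than to $u$.

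The main obstacle is the $R_1$-piece corresponding to $J_3$, namely $\sum\langle(\pa_ie_0-e_{h,gi})n_j,(I-\mathcal{Q}_h)\pa^2_{ij}\Phi\rangle_{\pa T}$: a naive application of Lemma \ref{PROJECTION-ESTIMATE-68} yields only $\3bar e_h\3bar\cdot O(h\|\Phi\|_4)=O(h^2\|u\|_4\|\Phi\|_4)$, which is borderline; one must check that the trace and projection estimates are applied in the order that keeps the $\|\Phi\|$-factor at exactly the $H^4$-level so that \eqref{dual-regular} can be invoked cleanly. Once the six terms are bounded in this way, combining them with \eqref{dual-regular} completes the proof.
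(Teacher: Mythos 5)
Your proposal is correct and follows essentially the same route as the paper: a duality argument that tests the dual problem with $e_0$, integrates by parts using $\pmb{e_g}=\mathbf{0}$ on $\pa\O$, converts the Hessian term via \eqref{error equation-13} and \eqref{comu1} together with the error equation tested at $v=Q_h\Phi$, and arrives at the identity $\|e_0\|^2=\zeta_u(Q_h\Phi)-\zeta_\Phi(e_h)$ (your $-s(e_h,Q_h\Phi)+R_1$ is exactly $-\zeta_\Phi(e_h)$ written with the stabilizer separated out), after which each piece is bounded precisely as you describe. The step you flag as "borderline" is in fact exactly the paper's estimate \eqref{THM:L2-estimate-4}, namely $|\zeta_\Phi(e_h)|\lesssim h\3bar e_h\3bar\|\Phi\|_4\lesssim h^2\|u\|_4\|e_0\|$, which suffices.
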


\begin{proof}
On each face $\F\subset\pa\O$, it follows from \eqref{weak tangential derivative} and $e_b=0$ on $e\subset\pa\O$ that $\nabla_{w,\F}e_h=\textbf{0}$ on $\pa \Omega$, which together with $e_n=0$ on each $\F\subset\pa\O$ and \eqref{vgi}, leads to $\pmb{e_g}=\textbf{0}$ on $\pa\O.$

By testing the dual problem \eqref{dual-equation} by $e_0$ and using the usual integration by parts gives
\begin{equation}\label{THM:L2-estimate-1}
\begin{split}
 \|e_0\|^2
=&(\Delta^2\Phi,e_0)\\
=&\sum_{T\in{\cal T}_h}\sum_{i,j=1}^d (\pa^2_{ij}\Phi,\pa^2_{ij}e_0)_T-\langle\pa^2_{ij}\Phi,\pa_ie_0n_j\rangle_{\pa T}
   +\langle\pa_j(\pa^2_{ij}\Phi)n_i,e_0\rangle_{\pa T} \\
=&\sum_{T\in{\cal T}_h}\sum_{i,j=1}^d (\pa^2_{ij}\Phi,\pa^2_{ij}e_0)_T-\langle\pa^2_{ij}\Phi,(\pa_ie_0-e_{gi})n_j\rangle_{\pa T}\\
 &+\langle\pa_j(\pa^2_{ij}\Phi)n_i,e_0\rangle_{\pa T},
\end{split}
\end{equation}
where we also have used the fact $\sum_{T\in{\cal T}_h}\sum_{i,j=1}^d\langle\pa^2_{ij}\Phi,e_{gi}n_j\rangle_{\pa T}=0$ due to $\pmb{e_g}=\textbf{0}$ on $\pa\O.$

To deal with the first term on the third line in \eqref{THM:L2-estimate-1}, it follows from \eqref{error equation-3} with $u=\Phi$, $v=e_h$ and \eqref{Error-equation} with $v=Q_h\Phi\in V_h^0$ that
\begin{equation}\label{THM:L2-estimate-2}
\begin{split}
 &\sum_{T\in{\cal T}_h}\sum_{i,j=1}^d(\pa_{ij}^2\Phi,\pa_{ij}^2e_0)_T\\
=&\sum_{T\in{\cal T}_h}\sum_{i,j=1}^d(\pa_{ij,w}^2e_h,\pa_{ij,w}^2(Q_h\Phi))_T
   + \langle(\pa_ie_0-e_{gi})n_j,{\S}\pa^2_{ij}\Phi\rangle_{\pa T}\\
=&-s(e_h,Q_h\Phi)+\zeta_u(Q_h\Phi)+\sum_{T\in{\cal T}_h}\sum_{i,j=1}^d\langle(\pa_ie_0-e_{gi})n_j,{\S}\pa^2_{ij}\Phi\rangle_{\pa T}.
\end{split}
\end{equation}
By inserting \eqref{THM:L2-estimate-2} into \eqref{THM:L2-estimate-1} and then combining \eqref{error equation-remainder}, we have
\begin{equation}\label{THM:L2-estimate-3}
\begin{split}
\|e_0\|^2
=&\zeta_u(Q_h\Phi)-\zeta_\Phi(e_h)\\
=&\sum_{i=1}^3I_i-\zeta_\Phi(e_h),
\end{split}
\end{equation}
where $I_i$ for $i=1,2,3$ are given by \eqref{error equation-remainder} with $v=Q_h\Phi.$

Next, it suffices to estimate each of four terms on the second line of \eqref{THM:L2-estimate-3}. As to the term $I_1$, using the Cauchy-Schwarz inequality, \eqref{trace}, \eqref{estimate-1} and \eqref{dual-regular}, we have
\begin{equation}\label{THM:L2-estimate-5}
\begin{split}
&|I_1|\\
=&|\sum_{T\in{\cal T}_h}h_T^{-2}\langle Q_b(Q_0u)-Q_bu,Q_b(Q_0\Phi)-Q_b\Phi\rangle_{\pa\F}\\
&+h_T^{-1}\langle Q_n(\nabla Q_0u)\cdot\bn_{f}- Q_n(\nabla u\cdot\bn_{f}),
         Q_n(\nabla Q_0\Phi)\cdot\bn_{f}- Q_n(\nabla\Phi\cdot\bn_{f})\rangle_{\pa T}|\\
\lesssim&\Big(\sum_{T\in{\cal T}_h}h_T^{-2}\|Q_0u-u\|_{\pa\F}^2\Big)^{\frac{1}{2}}
         \Big(\sum_{T\in{\cal T}_h}h_T^{-2}\|Q_0\Phi-\Phi\|_{\pa\F}^2\Big)^{\frac{1}{2}}\\
&+\Big(\sum_{T\in{\cal T}_h}h_T^{-1}\|\nabla Q_0u-\nabla u\|_{\pa T}^2\Big)^{\frac{1}{2}}
  \Big(\sum_{T\in{\cal T}_h}h_T^{-1}\|\nabla Q_0\Phi-\nabla\Phi\|_{\pa T}^2\Big)^{\frac{1}{2}}\\
\lesssim&\Big(\sum_{T\in{\cal T}_h}h_T^{-4}\|Q_0u-u\|_{T}^2+h_T^{-2}\|\nabla(Q_0u-u)\|_{T}^2+|Q_0u-u|_{2,T}^2\Big)^{\frac{1}{2}}\\
&
        \cdot\Big(\sum_{T\in{\cal T}_h}h_T^{-4}\|Q_0\Phi-\Phi\|_{T}^2+h_T^{-2}\|\nabla(Q_0\Phi-\Phi)\|_{T}^2+|Q_0\Phi-\Phi|_{2,T}^2\Big)^{\frac{1}{2}}\\
        &+h^2\|u\|_3\|\Phi\|_3\\
\lesssim&h^2\|u\|_3\|\Phi\|_3\\
\lesssim&h^2\|u\|_3\|e_0\|.
\end{split}
\end{equation}

To estimate the term $I_2$, from the fact that $\Phi=0$ on $\pa\O$ and $\sum_{T\in{\cal T}_h}\sum_{i,j=1}^d\langle\Phi,\pa_j(\pa^2_{ij}u)n_i\rangle_{\pa T}=0$, we arrive at
\begin{equation*}
\begin{split}
J_2=&-\sum_{T\in{\cal T}_h}\sum_{i,j=1}^d\langle Q_0\Phi,\pa_j(\pa^2_{ij}u)n_i\rangle_{\pa T}\\
=&-\sum_{T\in{\cal T}_h}\sum_{i,j=1}^d\langle(Q_0\Phi-\Phi)+\Phi,\pa_j(\pa^2_{ij}u)n_i\rangle_{\pa T}\\
=&\sum_{T\in{\cal T}_h}\sum_{i,j=1}^d\langle \Phi-Q_0\Phi,\pa_j(\pa^2_{ij}u)n_i\rangle_{\pa T}.
\end{split}
\end{equation*}
By using the Cauchy-Schwarz inequality, \eqref{trace}, \eqref{estimate-1} and \eqref{dual-regular} gives
\begin{equation}\label{THM:L2-estimate-6}
\begin{split}
|I_2|\lesssim&\Big(\sum_{T\in{\cal T}_h}\|\Phi-Q_0\Phi\|_{\pa T}^2\Big)^{\frac{1}{2}}
    \Big(\sum_{T\in{\cal T}_h}\sum_{i,j=1}^d\|\pa_j(\pa^2_{ij}u)\|_{\pa T}^2\Big)^{\frac{1}{2}}\\
\lesssim&h^2\|u\|_4\|\Phi\|_4\\
\lesssim&h^2\|u\|_4\|e_0\|.
\end{split}
\end{equation}

To deal with the term $I_3$, from \eqref{decomposition-new}, the Cauchy-Schwarz inequality, \eqref{trace}, \eqref{estimate-1}, \eqref{estimate-2} and \eqref{dual-regular}, we obtain
\begin{equation}\label{THM:L2-estimate-7}
\begin{split}
|I_3|=&|\sum_{T\in{\cal T}_h}\sum_{i,j=1}^d\langle(\pa_i Q_0\Phi-( Q_n\nabla\Phi)_i)n_j,(I-{\S})\pa^2_{ij}u\rangle_{\pa T}|\\
\lesssim&\Big(\sum_{T\in{\cal T}_h}\|\nabla Q_0\Phi-\nabla\Phi\|_{\pa T}^2\Big)^{\frac{1}{2}}
\Big(\sum_{T\in{\cal T}_h}\sum_{i,j=1}^d\|(I-{\S})\pa^2_{ij}u\|_{\pa T}^2\Big)^{\frac{1}{2}}\\
\lesssim&\Big(\sum_{T\in{\cal T}_h}h_T^{-1}\|\nabla Q_0\Phi-\nabla\Phi\|_{T}^2+h_T|Q_0\Phi-\Phi|_{2,T}^2\Big)^{\frac{1}{2}}\\
&\cdot  \Big(\sum_{T\in{\cal T}_h}\sum_{i,j=1}^dh_T^{-1}\|(I-{\S})\pa^2_{ij}u\|_{T}^2+h_T|(I-{\S})\pa^2_{ij}u|_{1,T}^2\Big)^{\frac{1}{2}}\\
\lesssim&h^2\|u\|_3\|\Phi\|_3\\
\lesssim&h^2\|u\|_3\|e_0\|.
\end{split}
\end{equation}

For the last term $\zeta_\Phi(e_h)$, we apply the similar arguments as in \eqref{energy-estimate-3}-\eqref{energy-estimate-4} with $u=\Phi$, \eqref{energy-estimate-0} and \eqref{dual-regular} to obtain
\begin{equation}\label{THM:L2-estimate-4}
\begin{split}
|\zeta_\Phi(e_h)|\lesssim&h\3bare_h\3bar\|\Phi\|_4\\
                 \lesssim&h^2\|u\|_4\|\Phi\|_4\\
                 \lesssim&h^2\|u\|_4\|e_0\|.
\end{split}
\end{equation}
Finally, combining \eqref{THM:L2-estimate-5}-\eqref{THM:L2-estimate-4} with \eqref{THM:L2-estimate-3} verifies \eqref{THM:L2-estimate-0}. This completes the proof of the theorem.
\end{proof}

To establish the error estimates for the numerical approximations defined on the faces $\F_h$ and edges $\E_h$, we introduce
$$
\|e_b\|_{\E_h}=\Big(\sum_{T\in{\cal T}_h}h_T^2\|e_b\|_{\pa{\F}}^2\Big)^{\frac{1}{2}},\qquad
\|e_n\|_{\F_h}=\Big(\sum_{T\in{\cal T}_h}h_T\|e_n\|_{\pa T}^2\Big)^{\frac{1}{2}}.
$$
\begin{theorem}\label{THM:L2-estimate-edge}
Under the assumptions of Theorem \ref{THM:L2-estimate-e0}, the following error estimates hold true:
\begin{equation*}
\begin{split}
&\|e_b\|_{\E_h}\lesssim h^2\|u\|_4,\\
&\|e_n\|_{\F_h}\lesssim h\|u\|_4.
\end{split}
\end{equation*}
\end{theorem}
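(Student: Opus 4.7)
The plan is to prove each bound by splitting the boundary unknown into a ``stabilizer residual'' and a projection part, then estimating the residual via the energy bound of Theorem~\ref{THM:energy-estimate} and the projection part via the $L^2$ estimate of Theorem~\ref{THM:L2-estimate-e0}.

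For $\|e_b\|_{\E_h}$, decompose $e_b=(e_b-Q_be_0)+Q_be_0$. Since the stabilizer is controlled by the energy norm, $\sum_T h_T^{-2}\|Q_be_0-e_b\|_{\pa\F}^2\le s(e_h,e_h)\le\3bare_h\3bar^2\lesssim h^2\|u\|_4^2$, and reweighting by $h_T^4$ gives
$$
\sum_T h_T^2\|e_b-Q_be_0\|_{\pa\F}^2\le h^4\sum_T h_T^{-2}\|Q_be_0-e_b\|_{\pa\F}^2\lesssim h^6\|u\|_4^2.
$$
For the projection part, the $L^2$-boundedness of $Q_b$ gives $\|Q_be_0\|_{\pa\F}\le\|e_0\|_{\pa\F}$; since $e_0|_T\in P_2(T)$, two applications of the polynomial inverse-trace inequality~\eqref{trace1} (first $\F\to\pa\F$, then $T\to\F$) yield $\|e_0\|_{\pa\F}^2\lesssim h_T^{-2}\|e_0\|_T^2$, and therefore $\sum_T h_T^2\|Q_be_0\|_{\pa\F}^2\lesssim\|e_0\|^2\lesssim h^4\|u\|_4^2$ by Theorem~\ref{THM:L2-estimate-e0}. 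Combining the two pieces delivers $\|e_b\|_{\E_h}\lesssim h^2\|u\|_4$.

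For $\|e_n\|_{\F_h}$ I would apply the analogous decomposition $e_n=(e_n-Q_n(\nabla e_0)\cdot\bn)+Q_n(\nabla e_0)\cdot\bn$. The stabilizer piece obeys $\sum_T h_T\|e_n-Q_n(\nabla e_0)\cdot\bn\|_{\pa T}^2\le h^2 s(e_h,e_h)\lesssim h^4\|u\|_4^2$. For the projection piece, the $L^2$-boundedness of $Q_n$ together with~\eqref{trace1} gives $\|Q_n(\nabla e_0)\cdot\bn\|_{\pa T}\le\|\nabla e_0\|_{\pa T}\lesssim h_T^{-1/2}|e_0|_{1,T}$, so its contribution reduces to $\sum_T|e_0|_{1,T}^2$.

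The main obstacle is therefore the $H^1$-type bound $\sum_T|e_0|_{1,T}^2\lesssim h^2\|u\|_4^2$. The cleanest route is the polynomial inverse inequality $|e_0|_{1,T}\lesssim h_T^{-1}\|e_0\|_T$ together with the $L^2$ estimate $\|e_0\|\lesssim h^2\|u\|_4$; on a quasi-uniform partition this at once produces $\sum h_T^{-2}\|e_0\|_T^2\lesssim h^{-2}\|e_0\|^2\lesssim h^2\|u\|_4^2$. An alternative argument uses elementwise integration by parts combined with~\eqref{trace1} to derive the localized bound $|e_0|_{1,T}^2\lesssim\|e_0\|_T|e_0|_{2,T}+h_T^{-2}\|e_0\|_T^2$; summing, then pairing with Lemma~\ref{PROJECTION-ESTIMATE-6} (which yields $\sum|e_0|_{2,T}^2\lesssim h^2\|u\|_4^2$) and with Theorem~\ref{THM:L2-estimate-e0}, gives the same conclusion. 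Assembling these pieces yields $\|e_n\|_{\F_h}\lesssim h\|u\|_4$.
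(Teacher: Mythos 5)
Your argument for $\|e_b\|_{\E_h}$ is, in substance, identical to the paper's proof: the same splitting $e_b=(e_b-Q_be_0)+Q_be_0$, with the first piece absorbed into $h^4\3bar e_h\3bar^2$ via the stabilizer and the second piece reduced to $\|e_0\|$ by two applications of \eqref{trace1}; so that half is correct and takes the same route. For $\|e_n\|_{\F_h}$ the paper only says ``the similar argument applies,'' and your completion is the natural one, but note that your key remaining step $\sum_T|e_0|_{1,T}^2\lesssim h^2\|u\|_4^2$ genuinely requires quasi-uniformity (i.e., $h_T\gtrsim h$) in both of the routes you propose, since each leaves a term $\sum_Th_T^{-2}\|e_0\|_T^2$ that cannot be bounded by $h^{-2}\|e_0\|^2$ under shape regularity alone; you flag this honestly, and a way to avoid it entirely would be a broken Poincar\'e-type inequality $\sum_T|v_0|_{1,T}^2\lesssim\3bar v\3bar^2$ for $v\in V_h^0$, which together with Theorem~\ref{THM:energy-estimate} gives the needed bound directly.
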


\begin{proof}
By using the triangular inequality, \eqref{trace1}, \eqref{energy-estimate-0}  and \eqref{THM:L2-estimate-0}, we obtain
\begin{equation*}\label{L2-estimate-edge-2}
\begin{split}
\|e_b\|_{\E_h}
\lesssim&\Big(\sum_{T\in{\cal T}_h}h_T^2\|Q_be_0\|_{\pa{\F}}^2+h_T^2\|e_b-Q_be_0\|_{\pa{\F}}^2\Big)^{\frac{1}{2}}\\
\lesssim&\Big(\sum_{T\in{\cal T}_h}h_T\|e_0\|_{\pa T}^2+h_T^4\3bare_h\3bar^2\Big)^{\frac{1}{2}}\\
\lesssim&\Big(\sum_{T\in{\cal T}_h}\|e_0\|_{T}^2+h^6\|u\|_4^2\Big)^{\frac{1}{2}}\\
\lesssim&h^2\|u\|_4,
\end{split}
\end{equation*}
which leads to the first estimate for $e_b$. The similar argument can be applied to derive the error estimate for $e_n$. This completes the proof.
\end{proof}

\section{Numerical experiments}\label{Section:NE}

Several numerical experiments will be implemented to verify the convergence theory established in previous sections. In our numerical examples, the randomised quadrilateral partition, the hexagonal partition, and the non-convex octagonal partition are generated by PolyMesher package \cite{polymesher_APP2012}(see Figure \ref{Pictu-polygonal} (a)-(c) for initial partitions) and the next level of the partitions are refined by the Lloyd iteration \cite{polymesher_APP2012} (see Figure \ref{Pictu-polygonal} (d)-(f)). The uniform cubic partition is generated by uniformly refining the initial $2\times2\times2$ cubic partition of domain $\O=(0,1)^3$ into $2^N\times2^N\times2^N$  cubes for $N=2,\ldots,5$.  The uniform triangular partition and the uniform rectangular  partition are obtained similarly.
\begin{figure*}[h]
\centering
\subfigure [Level 1]{\label{fig: a1}
\includegraphics[height=0.25\textwidth,width=0.31\textwidth]{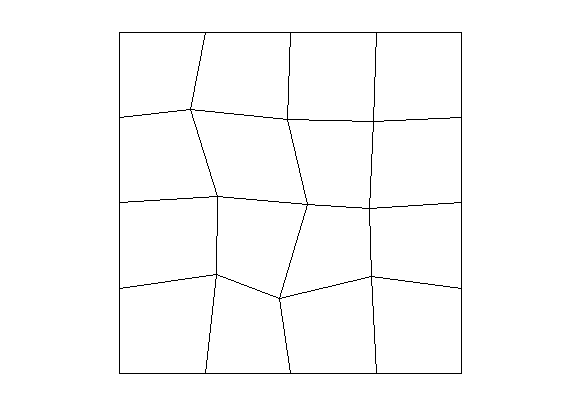}}
\subfigure [Level 1]{\label{fig: b1}
\includegraphics[height=0.25\textwidth,width=0.31\textwidth]{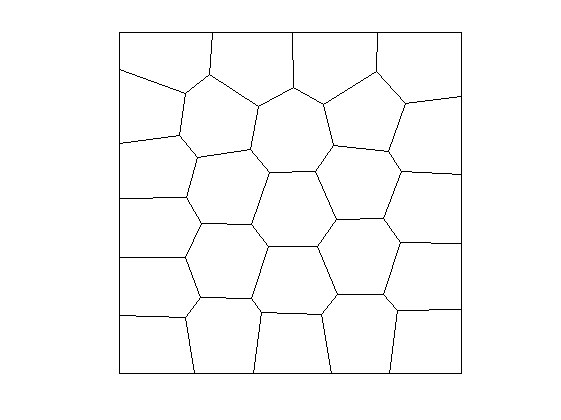}}
\subfigure [Level 1]{\label{fig: c1}
\includegraphics[height=0.25\textwidth,width=0.31\textwidth]{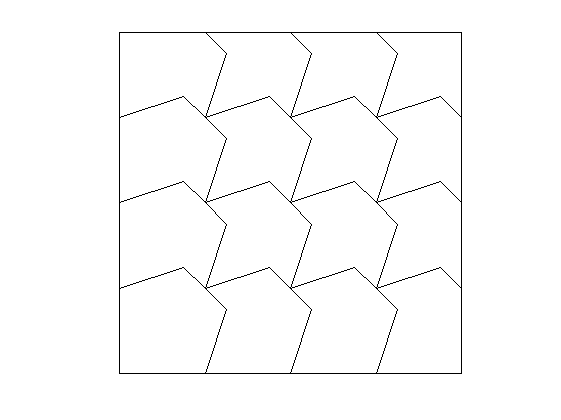}}
\subfigure [Level 2]{\label{fig: a2}
\includegraphics[height=0.25\textwidth,width=0.31\textwidth]{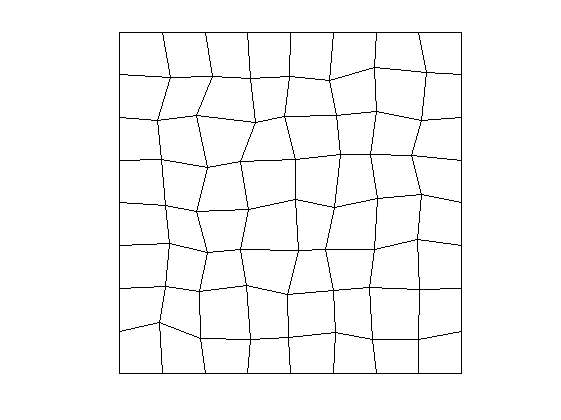}}
\subfigure [Level 2]{\label{fig: b2}
\includegraphics[height=0.25\textwidth,width=0.31\textwidth]{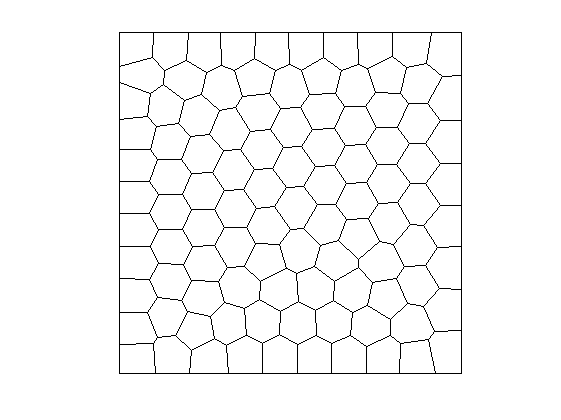}}
\subfigure [Level 2]{\label{fig: c2}
\includegraphics[height=0.25\textwidth,width=0.31\textwidth]{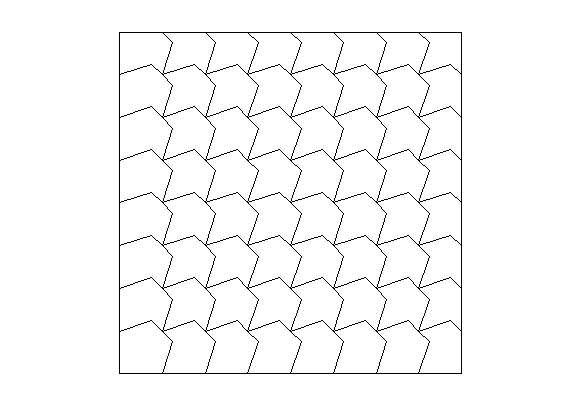}}
\caption{Level 1: Initial partitions $(a)-(c)$; Level 2: Partitions after one refinement  ($d$)-$(f)$.}\label{Pictu-polygonal}
\end{figure*}

In addition to computing $\3bar e_h\3bar$, $\|e_0\|$, $\|e_b\|_{{\cal E}_h}$ and $\|e_n\|_{\F_h}$,  more metrics are employed
\begin{equation*}
\begin{split}
 \ \displaystyle\|\nabla_{w,\F}e_b\|_{\F_h}
&=\Big(\sum_{T\in{\cal T}_{h}}h_T\|\nabla_{w,\F}e_b\|^2_{\pa T}\Big)^{1/2},\\
  \ \displaystyle\|\nabla(u-u_0)\|&=\Big(\sum_{T\in{\cal T}_{h}}\|\nabla(u-u_0)\|^2_{T}\Big)^{1/2}.
\end{split}
\end{equation*}

\noindent\underline{Test Example 1.} Table \ref{NE:polygonal-1} shows some numerical results when the exact solution is given by $u=\cos(x+1)\sin(2y-1)$ in the domain $\O=(0,1)^2$  on different types of polygonal partitions shown in Figure \ref{Pictu-polygonal}. For the uniform triangular partition and uniform rectangular partition, we can see from Table \ref{NE:polygonal-1} that the convergence rates for $\3bar e_h\3bar$, $\|e_0\|$, $\|e_b\|_{\E_h}$ are consistent with what our theory predicts, and the convergence rate for $\|e_n\|_{\F_h}$ is higher than the theoretical prediction of ${\cal O}(h)$. Moreover,  we   observe the convergence rates for  $\|\nabla_{w,\F}e_b\|_{\F_h}$ and $\|\nabla(u-u_0)\|$ are of order ${\cal O}(h^2)$ on the uniform triangular partition and uniform rectangular partition, for which the theory has not been developed in this paper. In addition, note that the theory established in previous sections does not cover the  polygonal partitions shown in Figure \ref{Pictu-polygonal}. However, we compute  the convergence rates in various norms on the polygonal partitions shown in Figure \ref{Pictu-polygonal} using the least-square methods \cite{CWY2014} and the corresponding convergence rates in various norms are illustrated in Table \ref{NE:polygonal-1}.

\begin{table}[h]
\caption{Numerical errors and convergence rates for the exact solution $u=\cos(x+1)\sin(2y-1)$ on different polygonal partitions in   $\O=(0,1)^2$.}\label{NE:polygonal-1}
\begin{tabular}{|p{0.7cm}p{1.48cm}p{1.48cm}p{1.48cm}p{1.48cm}p{1.58cm}p{1.80cm}|}
\hline
Level&$\3bare_h\3bar$&$\|e_0\|$&$\|e_b\|_{\E_h}$&$\|e_n\|_{\F_h}$&$\|\nabla_{w,\F}e_b\|_{\F_h}$&
$\|\nabla(u-u_0)\|$\\
\hline
&\text{Uniform~triangular~partition}&&&&&\\
\hline
1          &1.58E-01      &1.54E-03      &6.04E-04      &1.44E-02      &7.43E-03      &8.32E-03\\
2          &8.20E-02      &3.94E-04      &1.61E-04      &3.86E-03      &2.15E-03      &2.24E-03\\
3          &4.15E-02      &9.97E-05      &4.07E-05      &9.84E-04      &5.60E-04      &5.72E-04\\
4          &2.08E-02      &2.50E-05      &1.02E-05      &2.47E-04      &1.41E-04      &1.44E-04\\
5          &1.04E-02      &6.25E-06      &2.55E-06      &6.20E-05      &3.55E-05      &3.60E-05\\
\text{Rate}&1.00          &2.00          &2.00          &2.00          &2.00          &2.00\\
\hline
&\text{Uniform~rectangular~partition}&&&&&\\
\hline
1          &2.23E-01      &9.10E-04      &4.24E-04      &2.59E-02      &4.91E-03      &2.03E-02\\
2          &1.22E-01      &1.91E-04      &1.84E-04      &7.73E-03      &2.26E-03      &6.04E-03\\
3          &6.24E-02      &4.64E-05      &5.37E-05      &2.05E-03      &6.86E-04      &1.60E-03\\
4          &3.15E-02      &1.15E-05      &1.39E-05      &5.21E-04      &1.81E-04      &4.06E-04\\
5          &1.58E-02      &2.86E-06      &3.49E-06      &1.31E-04      &4.57E-05      &1.02E-04\\
\text{Rate}&1.00          &2.01          &1.99          &1.99          &1.98          &1.99\\
\hline
&\text{Randomised~quadrilateral~partition}&&&&&\\
\hline
1          &4.10E-01      &7.70E-03      &6.70E-03      &1.30E-01      &2.29E-02      &7.12E-02\\
2          &2.88E-01      &1.90E-03      &2.07E-03      &5.66E-02      &1.15E-02      &3.21E-02\\
3          &1.65E-01      &4.85E-04      &7.91E-04      &1.87E-02      &5.14E-03      &1.05E-02\\
4          &8.56E-02      &1.21E-04      &2.31E-04      &5.08E-03      &1.47E-03      &2.84E-03\\
5          &4.40E-02      &3.19E-05      &6.86E-05      &1.35E-03      &4.36E-04      &7.52E-04\\
\text{Rate}&0.99          &2.05          &1.84          &1.98          &1.85          &1.98\\
\hline
&\text{Hexagonal~partition}&&&&&\\
\hline
1          &4.82E-01      &5.82E-03      &7.97E-03      &1.07E-01      &1.08E-01      &7.38E-02\\
2          &3.23E-01      &1.08E-03      &1.64E-03      &4.03E-02      &4.57E-02      &2.84E-02\\
3          &1.75E-01      &2.38E-04      &4.30E-04      &1.25E-02      &1.21E-02      &8.69E-03\\
4          &1.03E-01      &4.53E-05      &9.46E-05      &3.51E-03      &5.04E-03      &2.43E-03\\
5          &5.30E-02      &8.70E-06      &1.89E-05      &8.92E-04      &1.37E-03      &6.15E-04\\
\text{Rate}&0.86          &2.38          &2.25          &1.90          &1.57          &1.91\\
\hline
&\text{Non-convex~octagonal~partition}&&&&&\\
\hline
1          &3.90E-01      &4.22E-03      &9.68E-03      &5.20E-02      &6.99E-02      &3.99E-02\\
2          &2.69E-01      &9.29E-04      &2.34E-03      &2.07E-02      &3.11E-02      &1.51E-02\\
3          &1.54E-01      &2.36E-04      &5.92E-04      &6.29E-03      &1.01E-02      &4.58E-03\\
4          &8.11E-02      &6.53E-05      &1.63E-04      &1.71E-03      &2.81E-03      &1.24E-03\\
5          &4.15E-02      &1.72E-05      &4.31E-05      &4.41E-04      &7.37E-04      &3.22E-04\\
\text{Rate}&0.94          &1.89          &1.89          &1.92          &1.89          &1.91\\
\hline
\end{tabular}
\end{table}

\noindent\underline{Test Example 2.} Table \ref{NE:polygonal-2} presents the numerical results on the uniform cubic partition in $\O=(0,1)^3$ for the exact solution $u=\exp(x+y+z)$. The convergence rates for $\3bare_h\3bar$, $\|e_0\|$ and $\|e_b\|_{\E_h}$ consist with our theory. Similar to Test Example 1, we can see  a super-convergence rate for $\|e_n\|_{\F_h}$ from Table \ref{NE:polygonal-2}. In addition,  Table \ref{NE:polygonal-2} presents the convergence rates for $\|\nabla_{w,\F}e_b\|_{\F_h}$ and $\|\nabla(u-u_0)\|$ for which  no theory is available to support.

\begin{table}[h]
\begin{center}
\centering
\caption{Numerical errors and  convergence rates for the exact solution $u=\exp(x+y+z)$ on the uniform cubic partition in $\O=(0,1)^3$.}\label{NE:polygonal-2}
\begin{tabular}{|p{1cm}p{1.48cm}p{1.58cm}p{1.58cm}p{1.58cm}p{1.88cm}p{0.8cm}|}
\hline
Level           &$\3bare_h\3bar$&Rate   &$\|e_0\|$ &Rate        &$\|e_b\|_{\E_h}$ &Rate\\
\hline
1               &2.68E-00  &~~-         &3.37E-01  &~~-         &1.88E-01  &~~-\\
2               &1.79E-00  &0.58        &3.49E-02  &3.27        &4.69E-02  &2.00\\
3               &9.85E-01  &0.86        &5.52E-03  &2.66        &1.24E-02  &1.92\\
4               &5.07E-01  &0.96        &1.10E-03  &2.33        &3.11E-03  &1.99\\
5               &2.55E-01  &0.99        &2.50E-04  &2.15        &7.67E-04  &2.02\\
\hline
Level           &$\|e_n\|_{\F_h}$ &Rate        &$\|\nabla_{w,\F}e_b\|_{\F_h}$&Rate&$\|\nabla(u-u_0)\|$&Rate\\
\hline
1               &8.05E-01  &~~-         &3.30E-01  &~~-         &6.62E-01  &~~-   \\
2               &3.15E-01  &1.35        &1.41E-01  &1.22        &2.58E-01  &1.36\\
3               &8.93E-02  &1.82        &5.26E-02  &1.42        &8.04E-02  &1.68\\
4               &2.30E-02  &1.96        &1.53E-02  &1.79        &2.18E-02  &1.88\\
5               &5.76E-03  &2.00        &4.00E-03  &1.93        &5.61E-03  &1.96\\
\hline
\end{tabular}
\end{center}
\end{table}

\noindent\underline{Test Example 3.} Table \ref{NE:polygonal-3} illustrates the numerical performance   on the polygonal partitions shown in Figure \ref{Pictu-polygonal} for a low regularity solution given by $u=r^{5/3}\sin(\frac{5}{3}\theta)$, where $r=\sqrt{x^2+y^2}$ and $\theta=\arctan(y/x)$. It is easy to check $u\in H^{8/3-\varepsilon}(\O)$ for arbitrary small $\varepsilon>0$ does not satisfy the regularity assumption $u\in H^4(\O)$. We observe from these numerical results that on the uniform triangular partition and uniform rectangular partition, the convergence rates for $\3bar e_h\3bar$, $\|e_0\|$, $\|e_b\|_{\E_h}$, $\|e_n\|_{\F_h}$, $\|\nabla_{w,\F}e_b\|_{\F_h}$, $\|\nabla(u-u_0)\|$ are  of orders ${\cal O}(h^{2/3})$, ${\cal O}(h^2)$, ${\cal O}(h^2)$, ${\cal O}(h^{5/3})$, ${\cal O}(h^{5/3})$, ${\cal O}(h^{5/3})$, respectively. Moreover, the numerical performance of the WG solution on the polygonal partitions is demonstrated in Table \ref{NE:polygonal-3}.

\begin{table}[h]
\caption{Numerical errors and convergence rates for the  exact solution $u=r^{5/3}\sin(\frac{5}{3}\theta)$ on the polygonal partitions in $\O=(0,1)^2$.}\label{NE:polygonal-3}
\begin{tabular}{|p{0.7cm}p{1.48cm}p{1.48cm}p{1.48cm}p{1.48cm}p{1.58cm}p{1.80cm}|}
\hline
Level&$\3bare_h\3bar$&$\|e_0\|$&$\|e_b\|_{\E_h}$&$\|e_n\|_{\F_h}$&$\|\nabla_{w,\F}e_b\|_{\F_h}$&
$\|\nabla(u-u_0)\|$\\
\hline
&\text{Uniform~triangular~partition}&&&&&\\
\hline
1          &2.99E-02      &7.31E-04      &2.82E-04      &2.02E-03      &3.55E-03      &2.15E-03\\
2          &2.00E-02      &1.86E-04      &7.44E-05      &6.61E-04      &1.20E-03      &7.02E-04\\
3          &1.31E-02      &4.53E-05      &1.83E-05      &2.12E-04      &3.90E-04      &2.25E-04\\
4          &8.39E-03      &1.10E-05      &4.48E-06      &6.74E-05      &1.25E-04      &7.16E-05\\
5          &5.35E-03      &2.71E-06      &1.11E-06      &2.14E-05      &3.97E-05      &2.27E-05\\
\text{Rate}&0.65          &2.02          &2.02          &1.66          &1.65          &1.66\\
\hline
&\text{Uniform~rectangular~partition}&&&&&\\
\hline
1          &1.30E-01      &1.74E-03      &2.20E-03      &1.57E-02      &1.73E-02      &1.22E-02\\
2          &8.89E-02      &5.27E-04      &6.53E-04      &5.62E-03      &6.43E-03      &4.31E-03\\
3          &5.80E-02      &1.36E-04      &1.68E-04      &1.83E-03      &2.15E-03      &1.40E-03\\
4          &3.73E-02      &3.37E-05      &4.14E-05      &5.85E-04      &6.99E-04      &4.45E-04\\
5          &2.38E-02      &8.35E-06      &1.02E-05      &1.86E-04      &2.24E-04      &1.41E-04\\
\text{Rate}&0.65          &2.02          &2.02          &1.66          &1.64          &1.66\\
\hline
&\text{Randomised~quadrilateral~partition}&&&&&\\
\hline
1          &1.59E-01      &5.47E-03      &1.75E-02      &3.82E-02      &3.71E-02      &2.74E-02\\
2          &1.32E-01      &2.26E-03      &7.64E-03      &1.79E-02      &2.03E-02      &1.24E-02\\
3          &9.09E-02      &7.20E-04      &2.53E-03      &6.49E-03      &7.74E-03      &4.26E-03\\
4          &5.95E-02      &1.77E-04      &6.41E-04      &2.16E-03      &2.55E-03      &1.39E-03\\
5          &3.85E-02      &5.25E-05      &1.95E-04      &6.89E-04      &8.77E-04      &4.39E-04\\
\text{Rate}&0.65          &1.97          &1.93          &1.69          &1.64          &1.71\\
\hline
&\text{Hexagonal~partition}&&&&&\\
\hline
1          &2.45E-01      &3.22E-03      &9.78E-03      &2.30E-02      &1.07E-01      &1.89E-02\\
2          &2.02E-01      &1.34E-03      &3.36E-03      &8.79E-03      &5.30E-02      &7.82E-03\\
3          &1.30E-01      &4.49E-04      &1.02E-03      &3.60E-03      &1.72E-02      &3.28E-03\\
4          &7.75E-02      &8.34E-05      &1.86E-04      &1.17E-03      &5.30E-03      &1.07E-03\\
5          &5.27E-02      &1.99E-05      &4.41E-05      &3.89E-04      &1.76E-03      &3.17E-04\\
\text{Rate}&0.65          &2.25          &2.26          &1.61          &1.64          &1.68\\
\hline
&\text{Non-convex~octagonal~partition}&&&&&\\
\hline
1          &1.43E-01      &1.53E-03      &4.87E-03      &1.41E-02      &3.02E-02      &1.86E-02\\
2          &1.10E-01      &6.96E-04      &1.82E-03      &5.89E-03      &1.39E-02      &6.97E-03\\
3          &7.59E-02      &2.42E-04      &6.06E-04      &2.14E-03      &5.11E-03      &2.45E-03\\
4          &5.02E-02      &6.84E-05      &1.71E-04      &7.20E-04      &1.72E-03      &8.15E-04\\
5          &3.25E-02      &1.75E-05      &4.39E-05      &2.36E-04      &5.60E-04      &2.64E-04\\
\text{Rate}&0.61          &1.89          &1.89          &1.59          &1.59          &1.61\\
\hline
\end{tabular}
\end{table}

\noindent\underline{Test Example 4.} Table \ref{NE:polygonal-4} demonstrates the numerical performance
on the uniform cubic partition in $\O=(0,1)^3$ for a low regularity solution given by $u=r^{3/2}\sin(\frac{3}{2}\theta)$, where $r=\sqrt{x^2+y^2}$ and $\theta=\arctan(y/x)$. The exact solution satisfies $u\in H^{5/2-\varepsilon}(\O)$ for arbitrary small $\varepsilon>0$. We observe that the numerical errors $\3bare_h\3bar$, $\|e_0\|$, $\|e_b\|_{\E_h}$, $\|e_n\|_{\F_h}$, $\|\nabla_{w,\F}e_b\|_{\F_h}$, $\|\nabla(u-u_0)\|$ converge at the rates of ${\cal O}(h^{1/2})$, ${\cal O}(h^2)$, ${\cal O}(h^2)$, ${\cal O}(h^{3/2})$, ${\cal O}(h^{3/2})$, ${\cal O}(h^{3/2})$, respectively. Therefore,  we conclude that the numerical performance of the WG method for the model equation \eqref{model-problem} with the low regularity solution is good although the corresponding mathematical theory has not been established in our paper.

\begin{table}[h]
\begin{center}
\caption{Numerical errors and convergence rates for the  exact solution $u=r^{3/2}\sin(\frac{3}{2}\theta)$ on the uniform cubic partition in $\O=(0,1)^3$.}\label{NE:polygonal-4}
\begin{tabular}{|p{1cm}p{1.48cm}p{1.58cm}p{1.58cm}p{1.58cm}p{1.88cm}p{0.8cm}|}
\hline
Level           &$\3bare_h\3bar$&Rate   &$\|e_0\|$ &Rate        &$\|e_b\|_{\E_h}$ &Rate\\
\hline
1               &1.43E-01  &~~-         &3.62E-03  &~~-         &1.26E-02  &~~-\\
2               &1.32E-01  &0.12        &1.52E-03  &1.26        &3.78E-03  &1.74\\
3               &9.65E-02  &0.45        &5.04E-04  &1.59        &1.04E-03  &1.86\\
4               &6.84E-02  &0.50        &1.39E-04  &1.86        &2.70E-04  &1.95\\
\hline
Level           &$\|e_n\|_{\F_h}$&Rate         &$\|\nabla_{w,\F}e_b\|_{\F_h}$&Rate&$\|\nabla(u-u_0)\|$&Rate\\
\hline
1               &3.27E-02  &~~-         &4.69E-02  &~~-         &4.13E-02  &~~-\\
2               &1.58E-02  &1.05        &2.55E-02  &0.88        &2.03E-02  &1.03\\
3               &5.65E-03  &1.48        &1.06E-02  &1.26        &8.09E-03  &1.32\\
4               &1.90E-03  &1.57        &3.99E-03  &1.41        &2.97E-03  &1.45\\
\hline
\end{tabular}
\end{center}
\end{table}

\newpage

\end{document}